\documentclass[12pt,reqno]{amsart}

\usepackage[T1]{fontenc}
\pdfoutput=1
\usepackage{mathptmx,microtype,graphicx}
\usepackage{amsthm,amsmath,amssymb}
\usepackage{enumitem}
\usepackage[font=footnotesize,margin=2.6cm]{caption}
\usepackage{cite}
\usepackage{xcolor}
\usepackage[hidelinks]{hyperref}
\usepackage{verbatim}
\makeatletter
\renewcommand\@biblabel[1]{#1.}
\makeatother

\theoremstyle{plain}
\newtheorem{thm}{Theorem}
\newtheorem{lemma}[thm]{Lemma}

\newtheorem{claim}[thm]{Claim}
\newtheorem{corollary}[thm]{Corollary}
\newtheorem{prop}[thm]{Proposition}

\newtheorem{question}[thm]{Question}
\theoremstyle{definition}
\newtheorem{definition}[thm]{Definition}

\theoremstyle{remark}

\newcommand\cG{\mathcal{G}}
\newcommand\Gnp{\mathcal{G}_{n,p}}
\newcommand\GnpH{\langle \mathcal{G}_{n,p}\rangle_H}

\newcommand\cA{\mathcal{A}}
\newcommand\cP{\mathcal{P}}
\newcommand\cH{\mathcal{H}}
\newcommand\cC{\mathcal{C}}

\newcommand\Z{\mathbb{Z}}

\renewcommand{\P}{{\mathbb P}}
\newcommand\E{{\mathbb E}}
\newcommand{\eps}{\varepsilon}

\renewcommand{\le}{\leqslant}
\renewcommand{\ge}{\geqslant}

\date{}

\author[Kolesnik]{Brett Kolesnik}
\address{University of Warwick, 
Department of Statistics}
\email{brett.kolesnik@warwick.ac.uk}

\author[Makai]{Tam\'as Makai}
\address{LMU Munich, Department of Mathematics}
\email{makai@math.lmu.de}

\author[Nenadov]{Rajko Nenadov}
\address{University of Canterbury,
School of Mathematics and Statistics}
\email{rajko.nenadov@canterbury.ac.nz}

\author[P\'erez-Gim\'enez]{Xavier P\'erez-Gim\'enez}
\address{University of Nebraska--Lincoln, 
Department of Mathematics}
\email{xperez@unl.edu}

\author[Pra{\l}at]{Pawe{\l} Pra{\l}at}
\address{Toronto Metropolitan University,
Department of Mathematics}
\email{pralat@torontomu.ca}

\author[Zhukovskii]{Maksim Zhukovskii}
\address{University of Sheffield,
School of Computer Science}
\email{m.zhukovskii@sheffield.ac.uk}

\keywords{bootstrap percolation; cellular automaton; critical threshold;
phase transition; random graph; weak saturation}
\subjclass[2010]{05C35; 	
05C80; 				
82B43; 				
68Q80} 				

\begin{document}

\title[Critical activation density
in graph bootstrap percolation]
{The critical activation density
in graph bootstrap percolation}

\dedicatory{Dedicated to Brendan McKay and Nick Wormald}

\begin{abstract} 
In graph bootstrap percolation, edges of an Erd{\H o}s--R\'enyi 
random graph $\Gnp$ are initially active, and activation 
spreads to other edges of $K_n$ via the combinatorics of a fixed 
graph $H$: an edge becomes active whenever it is the unique 
inactive edge in a copy of $H$. The process $H$-percolates if all 
edges of $K_n$ are eventually activated. While classical cases 
such as $H=K_3$ (connectivity) and $H=K_4$ (related to $2$-neighbor 
bootstrap percolation) have been studied extensively, general graphs 
$H$ can exhibit wildly different behaviors.

In this work, we determine the critical $H$-percolation threshold 
$p_c(n,H)$ for every graph $H$, fully resolving a longstanding 
open question of Balogh, Bollob\'as, and Morris. The location of 
$p_c(n,H)$ is governed by a new, universal parameter $\rho(H)$, 
which measures the maximal efficiency of witness graphs that activate 
an edge.

To achieve this, we introduce a novel framework based on the 
unfolding and refolding of witness graphs. While previous works 
were restricted to specific families of $H$, our approach provides 
a unified strategy for all $H$. Inspired by algebraic topology, we 
lift witness graphs to covering graphs and algorithmically embed 
folded versions into $\Gnp$ via a sequence of extensions. 
Crucially, this allows us to incorporate highly efficient witness 
graphs of unbounded size, which are potentially far larger than 
$\Gnp$ itself.

Beyond resolving $p_c(n,H)$, our framework recovers and strengthens 
several existing bounds in the literature. Finally, we initiate the 
study of the universal density parameter $\rho(H)$ and pose central 
open questions regarding its computability and its exact correspondence 
with the sharpness of the $H$-percolation threshold.
\end{abstract}

\maketitle

\section{Introduction}
\label{S_Intro}

In highly influential works, 
Ulam \cite{U50} and von Neumann \cite{vN66} 
introduced cellular automata in an attempt 
to understand the mystery of biological self-replication. 
Although such processes evolve according to local rules, 
they can nonetheless lead to complex global behavior.
Perhaps the most famous example is Conway's {\it Game of Life} \cite{Gar70}, 
in which, at any given moment, each 
square in an infinite square grid
takes value $0$ or $1$ 
(representing the absence and presence of life). 
The game consists of a simple set of rules by which squares change values, 
depending only on the values of those nearby.  
Despite its apparent simplicity, the game gives rise to a wide diversity of behavior, 
and it is generally undecidable whether a game will ever end.

Shortly after the work of von Neumann, 
in the context of extremal combinatorics, 
Bollob\'as \cite{Bol68} introduced an intriguing 
class of dynamics that evolve in 
graphs. The update rule for this model is 
governed by a fixed graph $H$. 
To begin, we start with a graph $G$ and let $K_G$ 
denote the clique on $V(G)$, the vertex set of $G$. 
The edges in $G_0=G$ are said to be {\it initially active}. 
In each step $t\ge1$,
a graph $G_t$ induced by all edges activated by time $t$ is obtained as follows: 
At time $t\ge1$, we {\it activate} each edge $e$ such that 
$H_e\setminus e\subseteq G_{t-1}$
for some copy 
$H_e\subseteq K_G$ of $H$ containing $e$, 
and obtain $G_t$ by adding all such edges $e$ to $G_{t-1}$. 
In other words, an edge becomes active if it is the only inactive edge in an otherwise
active copy of $H$ in $K_G$. 
The resulting graph $\cup_{t\ge0} G_t$ with all {\it eventually active} edges, 
obtained once the 
{\it $H$-dynamics} stabilize, 
is denoted by $\langle G\rangle_H$. 
The graph $G$ is said to 
be {\it weakly $H$-saturated} if $\langle G\rangle_H=K_G$. 
The main result in \cite{Bol68} identifies, for all $3\le r\le 6$,  the 
minimum  number 
${\rm wsat}(n,K_r) 
=(r-2)n-{r-1\choose2}$ 
of edges in a weakly $K_r$-saturated graph on 
$n$ vertices. This was later generalized, using a variety of methods, 
for all $r\ge3$, in independent works by Alon\cite{Alo85}, 
Frankl \cite{Fra82}, Kalai \cite{Kal84,Kal85}, and Lov\'{a}sz \cite{Lov77}. 

Remarkably, ${\rm wsat}(n,K_r)$ coincides with the minimal 
number ${\rm sat}(n,K_r)$ of edges
in a {\it strongly $K_r$-saturated} graph, 
with the property that 
adding {\it any} edge creates a copy of $K_r$ 
(i.e., one step of the $H$-dynamics turns $G$ into $K_G$). 
In identifying ${\rm sat}(n,K_r)$, 
Erd{\H o}s, Hajnal and Moon \cite{EHM64} 
showed there is a unique such graph, obtained 
by connecting each of $n-(r-2)$ many vertices to all vertices in a clique 
of size $r-2$. 
On the other hand, due to the more challenging, 
algorithmic nature of 
weak saturation, less is known about weakly $K_r$-saturated graphs. 
For instance, there is no explicit description of all weakly $K_r$-saturated graphs 
with ${\rm wsat}(n,K_r)$ many edges in the literature. 
One way to create such a graph, which we call 
an {\it $r$-Bollob\'as graph} (see \cite[Fig.~4]{Bol68}), 
is to start with a clique on $r-2$ vertices and then iteratively add the remaining 
$n-(r-2)$ vertices one at time, in each step adding edges to exactly 
$r-2$ previously
added vertices; but these are by no means
the only $K_r$-percolating graphs with the minimal number of edges.

One of the most well-studied of all cellular automata 
is called {\it bootstrap percolation}. This model was introduced 
by Chalupa, Leath and Reich \cite{CLR79}
to explain the sharp drop-off in the strength of a magnet observed 
as the concentration of a non-magnetic impurity reaches a critical value. 
A large literature has developed around the subject, which 
continues to remain active; see, e.g., the survey by Morris \cite{Mor17}. 
In bootstrap percolation, all vertices in a graph 
are initially {\it infected} independently with probability $p$.  
Further vertices become infected according to local rules.  
The most popular rule (although there are many variations) 
is the {\it $r$-neighbor rule}, under which a vertex becomes infected
once it has at least $r$ infected neighbors. 
Notice the connection with weak $K_r$-saturation: If the clique of 
size $r-2$ in an $r$-Bollob\'as graph is initially infected,  
then by induction all other vertices 
will become infected by the $(r-2)$-neighbor rule. 

More recently, Balogh, 
Bollob\'as and Morris \cite{BBM12} introduced 
{\it graph bootstrap percolation}, 
starting the $H$-dynamics with 
an Erd{\H o}s–Rényi random graph
$G=\Gnp$. 
In this context, if  $\GnpH=K_n$
we say that the process {\it $H$-percolates}.   
Recalling that sites are 
initially infected independently with probability 
$p$ in classical bootstrap percolation, 
it is thus quite natural to
initialize the $H$-dynamics with $\Gnp$, 
as then edges in $K_n$ are initially active independently with probability $p$. 
The  {\it critical $H$-percolation threshold}  is defined as  
\begin{equation}\label{E_pc}
p_c(n,H)=\inf\{p>0:\P(\GnpH=K_n)\ge1/2\}.
\end{equation}

As further motivation, we recall that weakly $K_r$-saturated graphs, 
and weakly $H$-saturated graphs more generally, 
are not well understood. As such, it is of interest
to study the structure of weakly $H$-saturated {\it random} graphs.  
When $p$ is close to $p_c$ the graph $\Gnp$
will, in some sense, favor the least-cost instances of such graphs. 
For example, when $\Gnp$ is a 
barely super-critical $K_4$-percolating graph it is likely to contain 
a $4$-Bollob\'as graph \cite{Kol22}. 
On the other hand, 
when $r\ge5$, barely super-critical $K_r$-percolating random graphs $\Gnp$
with high probability do not 
contain an $r$-Bollob\'as graph, 
but instead 
$K_r$-percolate in other ways that remain quite mysterious \cite{BKKP25}.

\subsection{Main results}
One of the main problems in this area, 
stated as Problem~1 in \cite{BBM12}, 
it to find the limit (assuming it exists) 
\[
\ell(H)=\lim_{n\to\infty}\frac{\log p_c(n,H)}{\log n},
\]
for {\it every} graph $H$. 
Our first result answers this question, which the
authors state \cite[p.\ 438]{BBM12} ``would represent a major step''
towards the ``ultimate aim of this line of research.'' 

\begin{definition}
For a graph $H$, let 
${\mathcal A}(H)$ be the set of all 
pairs $(e,A)$ 
such that $A$ is a graph and $e$ is an edge in $\langle A\rangle_H\setminus A$. 
We define the {\it critical $H$-activation density}  as 
\begin{equation}\label{E_rho}
\rho(H)=\inf_{(e,\,A)\in{\mathcal A}(H)}\, 
\max_{e\subset F\subseteq A\cup e}\, 
\frac{e(F)-1}{v(F)-2}.
\end{equation}
\end{definition}
Above, and throughout this work, we let $v(F)$ and $e(F)$ 
denote the number of vertices and edges in a graph $F$.

Note that, if $e(H)=0$ then $\rho(H)=\infty$ 
(by convention, since $\mathcal A(H)=\emptyset$); 
whereas, if $e(H)=1$ then $\rho(H)=0$. 
On the other hand, for $e(H)\ge 2$, 
one can easily verify that $0<\rho(H)<\infty$.
Throughout this work, we will for convenience 
assume that $H$ has no isolated
vertices, since adding isolated vertices to $H$
has no effect on $p_c(n,H)$, for all large enough $n$.

We note that the ``$-1$'' and ``$-2$'' in \eqref{E_rho} 
compensate for the edge $e$ and its vertices. 
Indeed, the quantity inside the infimum can be viewed
as the $2$-density of $G=A\cup e$, ``rooted'' at $e$. 
We recall that the {\it $2$-density} $m_2(G)$ of a graph $G$
is the maximum of $(e(F)-1)/(v(F)-2)$, over all 
subgraphs $F\subseteq G$ with $v(F)\ge3$. 
When maximizing in \eqref{E_rho}, we  require that $e\subset F$.

Our first main result locates $\ell(H)$. 

\begin{thm}\label{T_main}
For any graph $H$, 
\[
\ell(H)=-1/\rho(H).
\] 
\end{thm}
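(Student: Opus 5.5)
We prove the two inequalities $\ell(H)\ge -1/\rho(H)$ and $\ell(H)\le -1/\rho(H)$ separately. In both cases we work with $p=n^{-1/\rho+\eps}$ or $p=n^{-1/\rho-\eps}$ for a fixed small $\eps>0$. The trivial cases $e(H)\le1$ are handled directly, so assume $e(H)\ge2$ and $0<\rho(H)<\infty$.

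\textbf{Lower bound (subcritical regime).} Let $p=n^{-1/\rho-\eps}$. We show that with high probability a fixed edge $e=uv$ of $K_n$ not in $\Gnp$ is never activated. If $e$ becomes active, then tracing back the activation history yields a witness $A\subseteq\Gnp$ with $e\in\langle A\rangle_H\setminus A$. Choosing a minimal such history, every vertex is added through copies of $H$, so we may bound $v(A)$ in terms of the number of activation steps. By definition of $\rho$, every witness contains some $F\ni e$ with $(e(F)-1)/(v(F)-2)\ge\rho$. The expected number of copies of such an $F$ rooted at $e$ is at most $n^{v(F)-2}p^{e(F)-1}\le n^{-\eps(e(F)-1)}$, which is tiny. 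The difficulty is that witnesses may be unboundedly large, so we cannot union bound over all $A$.

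To handle this, we use a standard Aizenman--Lebowitz-type argument. Activated edges form \emph{clusters}, and any percolating event yields witnesses of every size scale between $1$ and $\log n$ roughly, up to a factor $v(H)$. Restricting to witnesses with $v(A)\le K$ for a large constant $K=K(\eps,H)$, a union bound over the finitely many isomorphism types gives probability $o(1)$ per type. However, $e$ must be any of $n^2$ edges, so we need a per-edge bound of $o(n^{-2})$. Instead, we show that no witness of intermediate size $\Theta(K)$ exists at all. For such $A$, the density $\ge\rho$ forces $n^{v-2}p^{e-1}$ times the $n^2$ choices of $e$ to still be $n^{2-\eps(e(F)-1)}$, where $e(F)$ grows with $K$. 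This is $o(1)$ once $K$ is large. Absence of intermediate witnesses then implies that only bounded clusters form, so there is no percolation.

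\textbf{Upper bound (supercritical regime).} Let $p=n^{-1/\rho+\eps}$ and pick $(e,A)\in\mathcal A(H)$ with rooted $2$-density $\rho'<\rho+\eps/2$. The naive approach embeds copies of $A$ rooted at every pair. For bounded $A$ this is a second-moment or Janson computation: the rooted density condition gives that all rooted extensions are dense-balanced enough that, with high probability, every pair $uv$ extends to a copy of $A\cup e$. Hence every edge is activated in one round and the process percolates. The real issue is that the near-optimal $A$ may be huge, even larger than $n$, so it cannot be embedded injectively. Our plan is to embed a \emph{folded} image of $A$: a graph homomorphism $A\to\Gnp$ under which $H$-activation is still valid. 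Activation is preserved under homomorphisms that are injective on each copy of $H$ used. We therefore lift $A$ to a covering graph, for instance a tree-like unfolding of its activation sequence, in which each step is a bounded extension. We then embed it greedily step by step via a sequence of bounded rooted extensions, each of which has density below $1/(1/\rho-\eps)$ and hence exists from any root with high probability, uniformly over roots, by Spencer-type extension statements. Local injectivity suffices, so repetitions of vertices across far-apart parts cause no problem.

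The main obstacle is this last step. We must decompose an arbitrarily large witness into a sequence of bounded extensions, each of which inherits the rooted density bound $\le\rho'$, and guarantee that folding does not destroy the activations. We would first attempt this by ordering the activation sequence of $\langle A\rangle_H$ and bounding the density of each incremental piece using the maximality in the definition of $\rho$.
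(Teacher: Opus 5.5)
\textbf{There is a genuine gap in your lower bound,} at the step where you rule out witnesses of intermediate size.

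The Aizenman--Lebowitz property you invoke concerns the witness graphs $W_f$ themselves: their sizes grow by at most a factor $e(H)$ per step of the WGA. But the definition of $\rho(H)$ only gives you \emph{some} $f\subset F\subseteq W_f\cup f$ with $e(F)-1\ge\rho(v(F)-2)$. Nothing forces $v(F)$ to grow with $v(W_f)$. The maximum in $\rho^{\rm max}(f,W_f\cup f)$ may be attained only by a bounded $F$, with the rest of $W_f$ a sparse extension; this is precisely the obstruction described in Section~\ref{S_approach}. For such $W_f$, your bound $n^{2-\eps(e(F)-1)}$ is not $o(1)$. The first moment $n^{v(W_f)}p^{e(W_f)}$ of the whole witness need not be small either. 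So the claim that ``$e(F)$ grows with $K$'' is unjustified, and the union bound over all $n^2$ roots fails.

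The missing idea is an Aizenman--Lebowitz property for the \emph{dense parts} rather than for the witnesses. This is Lemma~\ref{L_Fs}: for every activated $e$ there is $e\subset F_e\subseteq W_e\cup e$ with $\rho(e,F_e)\ge\rho$ and $v(F_e)-2\le\sum_{f\in E(H_e\setminus e)}v(F_f)$. The delicate point is that $e\cup\bigcup_f(F_f\setminus f)$ must still contain a dense part rooted at $e$ after the sparse remainders of the $W_f$ are discarded. The paper handles this with auxiliary graphs $B_e\supseteq F_e$, where $(F_e,B_e)$ is sparse and $B_e$ is glued from \emph{disjoint} copies of the remainders $B_f\setminus F_f$. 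Then $(e,B_e\setminus e)\in\cA(H)$ forces $\rho^{\rm max}(e,B_e)\ge\rho$, and Lemma~\ref{L_rho_comb}(2) pushes this density back into the union of the $F_f$. Given that lemma, your two cases go through: a dense part of size at most $e(H)K$ rooted at the fixed $e$, or one of size in $[K,e(H)K]$ rooted at some pair. Your polynomial slack even lets $K$ be a constant.

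\textbf{By contrast, your upper bound is in better shape than you think.} Since $\eps$ is fixed, you may pick $(e,A)\in\cA(H)$ with $\rho'=\rho^{\rm max}(e,A\cup e)$ satisfying $\rho'(1/\rho-\eps)<1$. (Note that $\rho'<\rho+\eps/2$ does not guarantee this when $\rho<1/\sqrt2$.) Such an $A$ is a \emph{fixed} finite graph, independent of $n$. Every rooted sub-extension then has mean at least $n^{c}$, and Janson gives $\P(X=0)\le\exp(-n^{c'})$ for each pair. A union bound then shows that w.h.p.\ every pair of $K_n$ carries a copy of $A$, which activates it by monotonicity.

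A near-optimal $A$ of unbounded size, possibly larger than $n$, is forced only when one insists on $\rho^{\rm max}<\rho+1/\log n$. The paper does this to get the polylogarithmic window of Theorem~\ref{T_main2}, and that is what the $\alpha_*$-cores, Lemma~\ref{L_smallC} and the folding construction are for. For the exponent alone you can drop your unfinished folding plan; the real work is in the lower bound.
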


This result includes the trivial cases that $e(H)=0$ (when $\rho(H)=\infty$ and  
$\ell(H)=0$) and also $e(H)=1$ (when $\rho(H)=0$ and $\ell(H)= -\infty$),  
but we are mainly interested in the cases that $e(H)\ge2$ so that $0<\rho(H)<\infty$.
The cases that $\rho(H)\le 1$ are relatively simple. In Section \ref{S_HT}
below, we prove hitting time results for all such $H$. 
These results, in particular, 
improve upon some of the results for specific $H$ presented in 
\cite[\S5]{BBM12}. 
For the case that $\rho(H)>1$ we prove the following.

\begin{thm}\label{T_main2}
Suppose that $\rho=\rho(H)>1$. Then 
$p_c=p_c(n,H)$ satisfies 
\begin{equation}\label{E_P1}
\Omega(1/\log n)
=n^{1/\rho} p_c
=O(\log^2 n).
\end{equation}
\end{thm}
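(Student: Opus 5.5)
We prove the two inequalities in \eqref{E_P1} separately: a first-moment argument for the lower bound and an embedding argument for the upper bound.

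\textbf{Lower bound.} Put $p = c\, n^{-1/\rho}/\log n$ with $c$ a small constant. The plan is to show that with high probability no edge outside $\Gnp$ ever becomes active.

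Suppose some edge $e\notin\Gnp$ is activated. Trace the activation history back from $e$. This yields a pair $(e,A)\in\mathcal A(H)$ with $A\subseteq\Gnp$, where $A$ is the union of the initially active edges used by the witnessing chain of copies of $H$. By the definition of $\rho(H)$, for this pair there is a subgraph $e\subset F\subseteq A\cup e$ with $(e(F)-1)/(v(F)-2)\ge\rho$.

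The difficulty is that $A$ may be unboundedly large, so a naive union bound over all $A$ fails. We therefore restrict attention to the first time some edge gets activated via a \emph{small} witness. Concretely, we show that any activated edge has a witness $A$ of size $O(\log n)$ that is minimal in a suitable sense. The argument is the standard ``Aizenman--Lebowitz''-type splitting for graph bootstrap percolation: a witness graph can be split at each $H$-step into at most $e(H)-1$ sub-witnesses, each of which is itself a witness for an earlier edge. This gives witnesses of every size scale between $k$ and $e(H)k$.

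Next we count rooted graphs $F$ with $v(F)-2=v$ and $e(F)-1\ge\rho v$. The number of such $F$ present in $\Gnp$, containing a fixed root pair, has expectation at most
\[
n^{v}\,(Cv)^{O(v)}\,p^{\rho v}\le \bigl(C' v^{O(1)}\, c^{\rho}\log^{-\rho}n\bigr)^v .
\]
This is summable over $v\le C\log n$ and is $o(n^{-2})$ once $v\gtrsim\log n$. A union bound over the $n^2$ roots finishes the proof of the lower bound.

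\textbf{Upper bound.} Fix $\eps>0$ and choose $(e,A)\in\mathcal A(H)$ whose rooted 2-density is at most $\rho+\eps$. When $\rho(H)$ is attained by a fixed finite $A$, the argument is as follows. Take $p=Cn^{-1/\rho}\log^2 n$. Using Janson's inequality and a second-moment extension count, show that every pair $e=uv$ is the root of a copy of $A$ in $\Gnp$ with probability $1-o(n^{-2})$. Here the $\log^2 n$ factor is what allows the rooted extension counts, which concentrate around $(np^{\rho})^{v}$, to dominate.

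The infimum in \eqref{E_rho} may not be attained, and efficient witnesses can be of unbounded size, possibly larger than $n$. This is where I expect the main obstacle to lie. The approach we follow is to unfold $A$ into a tree-like covering structure and refold it. Build $A$ as a sequence of grounded extensions, each with density at most $\rho$. Embed them greedily into $\Gnp$, allowing vertex reuse, which is the ``folding'' step. The key property to maintain is that every folded image still activates $e$. For this we verify that $H$-activation is preserved under graph homomorphisms that are injective on each copy of $H$ used in the activation sequence. Local injectivity is ensured by embedding extensions of bounded size one at a time, each of density at most $\rho$. Such an extension exists with probability bounded below as long as $np^{\rho}\ge \log n$.

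Finally, take a union bound over all pairs $e$. This shows that $\Gnp$ $H$-percolates with high probability, which gives $n^{1/\rho}p_c=O(\log^2 n)$.
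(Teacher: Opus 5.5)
\textbf{Lower bound.} You run the Aizenman--Lebowitz splitting on witness graphs, but the definition of $\rho(H)$ only guarantees that \emph{some} $f\subset F\subseteq W_f\cup f$ has $(e(F)-1)/(v(F)-2)\ge\rho$. For unbalanced $H$ this $F$ can be a small proper part of $W_f$, and the witness as a whole may be sparser than $\rho$. So an edge $f$ whose witness has between $\log n$ and $e(H)\log n$ vertices need not give any dense rooted structure on $\Theta(\log n)$ vertices. The second case of your union bound then collapses, because dense rooted structures of bounded size are plentiful at this $p$. For instance, if $\rho=a/b$, structures with $b$ extra vertices and $a$ further edges have expected number of order $n^2(\eps/\log^\rho n)^b\to\infty$ over all roots. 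Only your first case, where $e$ itself has a small witness, goes through. The missing ingredient is a splitting property for the dense parts themselves: for each activated $e$, some $e\subset F_e\subseteq W_e\cup e$ with $\rho(e,F_e)\ge\rho$ and $v(F_e)-2\le\sum_{f\in E(H_e\setminus e)}v(F_f)$. This is the paper's Lemma~\ref{L_Fs}, and it is not automatic. Its induction carries an auxiliary graph $B_e\supseteq F_e$ with $(e,B_e\setminus e)\in\cA(H)$ and, unless $B_e=F_e$, $\rho^{\max}(F_e,B_e)<\rho$. This is what forces $e\cup\bigcup_f(F_f\setminus f)$ to contain a new part of rooted density at least $\rho$. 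Your argument is the classical one, which is valid for balanced $H$ because there witnesses are themselves dense.

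\textbf{Upper bound.} The attained case is fine, and your remark that activation survives maps injective on each copy of $H$ is exactly the paper's unfolding/covering mechanism. But the general case cannot be built from the root by bounded-size extensions of density at most $\rho$. If $e=S_0\subset\cdots\subset S_m$ with $\rho^{\max}(S_{i-1},S_i)\le\rho$ for all $i$, then $\rho^{\max}(S_0,S_m)\le\rho$, so the unfolded witness would attain $\rho$. Yet for $H=K_r$ ($r\ge4$), every $(e,A)\in\cA(H)$ has $\rho^{\max}(e,A\cup e)>\rho$ by \eqref{E_lamrho}. On the other hand, a bounded extension whose max-density exceeds $\rho$ has $n^{-\Omega(1)}$ expected copies at a given location when $np^\rho$ is polylogarithmic. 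So the excess density must be spread over $\Theta(\log n)$ vertices and embedded in one go. This is why the paper:
\begin{itemize}
\item takes $\rho^{\max}(e^*,A^*\cup e^*)<\rho+1/\log n$ (a fixed $\eps$ costs a polynomial factor);
\item passes to $\alpha_*$-cores with $\alpha_*=\rho+(\rho+2)/\log n$ and proves they have at most $2\log n$ vertices (Lemma~\ref{L_smallC});
\item uses that each $(C^*_f,M^*_f)$ is $\alpha_*$-sparse (Lemma~\ref{L_sparse}).
\end{itemize}
Moreover, an extension existing ``with probability bounded below'' cannot be iterated over unboundedly many steps (possibly far more than $n$) at all $\binom n2$ roots. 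You need the deterministic whp property of Lemma~\ref{L_Janson}: every $\alpha_*$-sparse $(S,F)$ with $v(F)\le 2e(H)\log n$ extends at every $S'$. It comes from Janson's inequality with failure probability $e^{-\Omega(\log^2 n)}$ per configuration, which beats a union bound over $2^{O(\log^2 n)}n^{O(\log n)}$ configurations. That requirement, not the attained case, is the true source of $np^\rho=A\log^{2+2/b}n$ and hence of the $\log^2 n$ in the statement.
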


In fact, in Section \ref{S_UB} below, we  
prove a more technical upper bound  of 
\begin{equation}\label{E_ubpc}
n^{1/\rho} p_c = O(\log^{2/\rho+2/a} n), 
\end{equation}
where if $\rho$ is rational then $a/b$ is its
expression as an irreducible fraction, and if $\rho$ is 
irrational then $a$ can be taken to be arbitrarily large. 
For all $\rho>1$, we have
$a+\rho\le a\rho$, so \eqref{E_ubpc}
implies the upper bound in \eqref{E_P1}. 

We note that \eqref{E_ubpc} is slightly weaker than the 
bound $n^{1/\rho}p_c=O(\log^{4/r}n)$
proved in 
\cite[p.\ 416]{BBM12}
for cliques $H=K_r$, with $r\ge4$.
However, \eqref{E_ubpc} holds for {\it all}
graphs $H$, with $\rho>1$. 
In any case, we suspect (see Section \ref{S_Qs} below) that  
$p_c=O(n^{-1/\rho})$
might hold for all such $H$.

\subsection{Outline}
A brief overview of our proof strategy is given in Section~\ref{S_approach}, 
after recalling the notion of a witness graph in Section~\ref{S_WGs}.
In Section~\ref{S_balH}, 
we recall the notion of a balanced graph $H$, and discuss how our 
results apply in this well-studied case. 
A number of open problems are presented in Section~\ref{S_Qs}. 
In Section~\ref{S_LB}
we prove the lower bound
and in Section~\ref{S_UB}
we prove the upper bound in our main result, Theorem~\ref{T_main2}.
Finally, in Section~\ref{S_HT}, we prove hitting time results for all $H$ 
with a leaf or with $\rho(H)\le1$.

\subsection{Witness graphs}
\label{S_WGs}

We note that $\rho(H)$ is related to the notion of a 
{\it witness graph}, 
as introduced in 
\cite{BBM12}. 
For any graph $G$, 
and each edge $e$ in $\langle G\rangle_H$, 
the {\it witness graph algorithm (WGA)} \cite{BBM12}
finds an inclusion-minimal 
subgraph $W_e\subseteq G$
such that 
$e$ is in $\langle W_e\rangle_H$.
The WGA is an inductive procedure: 
\begin{itemize}[nosep]
\item Put $W_e=e$ for each initially active edge $e$ in $G_0=G$.
\item Then, for each edge $e$ in $G_t$, activated at some time $t\ge1$, 
we let $H_e$ (chosen arbitrarily if not unique) 
be the copy of $H$ that $e$ completes. We put
\[
W_e=\bigcup_{f\in E(H_e\setminus e)} W_f,
\]
noting that $H_e\setminus e\subseteq G_{t-1}$, 
and so $W_f$ has been defined by time $t-1$ for all edges $f$
in $H_e\setminus e$. 
\end{itemize}
By induction, for all edges $e$ in $\langle G\rangle_H\setminus G$, 
the witness graph $W_e$
is an inclusion-minimal 
subgraph $W_e\subset G$ for which 
$(e,W_e)\in{\mathcal A}(H)$. 
Therefore, if $(e,A)\in {\mathcal A}(H)$, then, by taking $G=A$, the WGA 
can be used to find an inclusion-minimal subgraph 
$W_e\subseteq A$ for which $(e,W_e)\in {\mathcal A}(H)$. 

Since \cite{BBM12}, 
most bounds on $p_c$ 
have proceeded as follows: 
\begin{itemize}[nosep]
\item 
To obtain an upper bound on $p_c$, a reasonably efficient   
class of witness graphs (typically of logarithmic size) is identified. 
If the correlations of such structures in $\Gnp$ can be controlled, they can be used to 
show that $H$-percolation occurs for all large enough $p$. 
\item To obtain a 
lower bound on $p_c$, a special property of the WGA is used, 
which is reminiscent of a property discovered by Aizenmann and Lebowitz
\cite{AL88} 
for $2$-neighbor bootstrap percolation on $\Z^2$: 
Specifically, if we define the {\it size} of a witness graph $W_e$ to be 
$v(W_e)-2$ (i.e., its number of vertices outside of $e$), then it can be seen
(see \cite[Lemma 13]{BBM12} and \cite[Lemma 8]{BK24}) 
that, started from any graph $G$, in each time step of the $H$-dynamics 
the maximum size of any witness graph defined so far 
increases by at most a factor of $e(H)$. Therefore, to show that 
some $e\in E(K_n)$ is, 
with high probability, 
{\it not} added by the $H$-dynamics
to $\Gnp$ it suffices to show that, with high probability, 
(1) $e$ has no witness graph of size at most 
$\gamma \log n$ (for an appropriately chosen constant $\gamma>0$)
and (2) no other edge $f$ has a witness graph of size between 
$\gamma \log n$ and $e(H)\gamma \log n$. 
\end{itemize}

\subsection{Our approach}
\label{S_approach}

A crucial barrier in applying the above proof strategy for arbitrary 
$H$ is that in optimally efficient  
$(e,A)\in{\mathcal A}(H)$, the maximum in~\eqref{E_rho} is 
not necessarily achieved for $F=A\cup e$. 
In particular, this influences the correlations mentioned above. 

When working with \eqref{E_rho}, we need to 
consider the {\it densest} subgraphs 
of witness graphs, rather than the witness graphs themselves. 
Recall that in the definition of $\rho(H)$, for each $A\in{\mathcal A}(H)$, 
we maximize the ratio 
$(e(F)-1)/(v(F)-2)$ over all subgraphs $e\subset F\subseteq A\cup e$. 
If some $F$ maximizes this ratio, we call $D=F\setminus e$ a {\it densest
part} of $A$ (with respect to $e$). 
For most graphs $H$ that have been studied, such as cliques $K_r$
(and, more generally, the balanced graphs $H$ discussed in Section \ref{S_balH}
below), in the most efficient witness graphs $W$
(e.g., in $K_r$-ladders, as in Figure \ref{F_ladder} below), the entire 
graph $D=W$ is {\it the} densest part of $W$. 

In proving the lower bound in Theorem~\ref{T_main2}, 
we show that 
an Aizenmann--Lebowitz-type property
continues to hold when looking at the dense parts $D$
of a general witness graph $W$ (for which possibly $D\neq W$). 
The key idea is to look at the 
{\it maximal} such $D_*\subseteq W$, and then argue that after each time step of WGA 
the maximum size over all such $D_*$ defined so far increases by at most 
a factor of $e(H)$. 

The upper bound in Theorem \ref{T_main2} is more complicated. 
We start by selecting some $(e,A_*)\in{\mathcal A}(H)$ such that 
\[
\max_{e\subset F\subseteq A_*\cup e}\, \frac{e(F)-1}{v(F)-2}
<\rho(H)+\frac{1}{\log n}.
\]
Note that $A_*$ depends on $n$, and is nearly as efficient as possible 
at activating a given edge $e$. The difficultly, however, lies in the fact
that we have no control whatsoever 
over the size of $A_*$. Indeed, $A_*$ may even be {\it much}
larger than $n$, the size of ${\mathcal G}_{n,p}$. As such, proof techniques
used in previous works do not apply. 
To overcome this challenge, 
we introduce a novel technique of embedding witness graphs in ${\mathcal G}_{n,p}$:
we will in some sense ``unfold'' the graph $A_*$ (to essentially obtain a covering graph
of $A_*$), and then recursively, one small
extension at a time, ``fold'' a modified version of $A_*$ into ${\mathcal G}_{n,p}$.

\subsection{Balanced graphs}
\label{S_balH}
Prior to the current work, the most general results
concerned the following class of graphs:  
Following \cite{BBM12}, we say that a graph $H$ is  
{\it balanced} if $v(H)\ge4$
and (cf.\ \eqref{E_rho} above) 
\begin{equation}\label{E_lam}
\frac{e(F)-1}{v(F)-2}\le \lambda(H):=\frac{e(H)-2}{v(H)-2}, 
\end{equation}
for all subgraphs $F\subset H$ with $3\le v(F)<v(H)$.\footnote{In \cite{BBM12} it is further 
assumed that $e(H)/2\ge v(H)-1$. This implies  $\lambda(H)\ge2$, which is used in the 
proof of Lemma 6 in \cite[p.\ 419]{BBM12}. However, in light of Lemma 16 in \cite{BK24}, 
only the minimal assumption that $v(H)\ge4$ is required.}

For instance, all cliques $K_r$ with $r\ge4$ are balanced. 
Naturally, $H$ is {\it strictly balanced} if \eqref{E_lam}
holds with $\le$ replaced by $<$. Cliques $K_r$ are
strictly balanced for all $r\ge5$; however, $K_4$ is balanced 
but not strictly. 
In some sense, ``most'' graphs $H$ are strictly balanced since,   
as shown by Bartha, Kronenberg and the 
first author \cite{BKK24},  
almost surely, 
 ${\mathcal G}_{\infty,1/2}[[r]]$ is  
strictly balanced 
for all large $r$, where ${\mathcal G}_{\infty,1/2}$ is the {\it Rado graph}, 
where every pair of vertices from the infinite set $\mathbb{Z}_{>0}$ 
is adjacent with probability $1/2$, 
and ${\mathcal G}_{\infty,1/2}[[r]]\sim \mathcal{G}_{r,1/2}$ 
is its induced subgraph on the first $r$ vertices. 

A main result in \cite[Theorem 1]{BBM12} shows that, 
for all $r\ge5$,  
\begin{equation}\label{E_Kr}
p_c(n,K_r)
= n^{-1/\lambda(K_r)+o(1)}. 
\end{equation}
The proof of the lower bound introduces the WGA, 
as described above, and also shows that  
when $H=K_r$, any witness graph $W$ with $v(W)$ vertices
has at least 
\begin{equation}\label{E_eWlb}
e(W)\ge\lambda(K_r)(v(W)-2)+1 
\end{equation}
many edges. 

On the other hand, 
the proof of the upper bound works for all balanced graphs
$H$, using a special type of witness graph called an {\it $H$-ladder}. 
An $H$-ladder $L$ of height $h$ is obtained by 
stringing together a series of $h$ many copies
of  $H$ in such a way that adjacent pairs intersect in a single edge. 
To obtain $L$, all such shared edges are removed, 
together with another edge $e$ from one of the copies of 
$H$ at an end of the series. 
By induction, the $H$-dynamics activate all such removed edges, 
and $L$ is a witness graph for $e$. 
An $H$-ladder $L$ of height $h$ has $v(L)=h(v(H)-2)+2$ 
many vertices and 
\[
e(L)
=h(e(H)-2)+1
=\lambda(H)(v(L)-2)+1
\] 
many edges; see Figure \ref{F_ladder} (cf.\ \cite[Fig.\ 1]{BBM12}). 

\begin{figure}[h]
\centering
\includegraphics[scale=1]{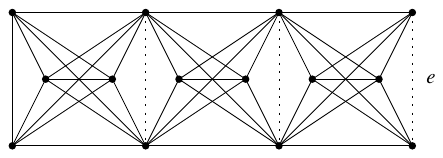}
\caption{A $K_6$-ladder $L$ of height $h=3$. 
All edges that join consecutive  
copies of $K_6$ 
and the edge $e$ at the end of the ladder 
are added to $L$ by the $K_6$-dynamics, 
so that $(e,L)\in{\mathcal A}(K_6)$.}
\label{F_ladder}
\end{figure}

In subsequent work by Bartha and the first author \cite{BK24}, 
it is shown that \eqref{E_Kr} extends to all balanced graphs $H$. 
A key step in this proof is to show (see \cite[Proposition 9]{BK24}) that, 
for any $H$ (with at least 4 vertices and minimum degree at least 2), 
a witness
graph $W$ with $v(W)$ many vertices has at least 
\begin{equation}\label{E_lamstar}
e(W)\ge\lambda_*(H)(v(W)-v(H))+e(H)-1
\end{equation}
many edges, where 
\[
\lambda_*(H)=\min_F\frac{e(H)-e(F)-1}{v(H)-v(F)}, 
\]
minimizing 
over all subgraphs $F\subset H$ with $2\le v(F)<v(H)$ 
vertices. 
In \cite[Theorem 4]{BK24}
it is shown that 
\[
p_c(n,H)\ge n^{-1/\lambda_*(H)+o(1)}. 
\]
Prior to the current work, this was the best general 
lower bound on $p_c$.

It can be shown (see \cite[Lemma 10]{BK24}) that 
$\lambda_*(H)\le \lambda(H)$, with equality if and only
if $H$ is balanced. 
Hence, in particular, \eqref{E_eWlb} above extends 
from $K_r$ to all balanced graphs $H$, noting that 
when $\lambda_*(H)=\lambda(H)$ the right hand side in 
\eqref{E_lamstar} simplifies to 
\[
\lambda(H)(v(W)-v(H))+e(H)-1
=\lambda(H)(v(W)-2)+1. 
\]
As such, as shown in \cite[Theorem 2]{BK24}, 
\begin{equation}\label{E_H}
p_c(n,H)
= n^{-1/\lambda(H)+o(1)},
\end{equation}
for all balanced graphs $H$. 

We note that, by Theorem~\ref{T_main} and the 
following observation, 
we recover the general result 
\eqref{E_H} for balanced graphs. 

\begin{prop}\label{P_rhoB}
For all balanced graphs $H$, 
we have that $\rho(H)=\lambda(H)$. 
\end{prop}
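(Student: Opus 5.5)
We prove the two inequalities $\rho(H)\le\lambda(H)$ and $\rho(H)\ge\lambda(H)$ separately.

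\emph{Upper bound via ladders.} For the upper bound we use $H$-ladders. Fix $h\ge1$ and let $L$ be an $H$-ladder of height $h$ with distinguished edge $e$, so $(e,L)\in\mathcal A(H)$. We must bound $(e(F)-1)/(v(F)-2)$ over all $e\subset F\subseteq L\cup e$. The cleanest route is to note that $L\cup e$ is itself a subgraph of the (non-percolated) union of the $h$ copies $H_1,\dots,H_h$ of $H$, and then induct along the ladder.

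Write $F_i=F\cap H_i$, with $e\in H_1$, say. Each $F_i$ is missing the shared edges, which are not in $L$. We claim that adding $F_i$ to $F_1\cup\dots\cup F_{i-1}$ adds at most $\lambda(H)$ times as many edges as new vertices. The new vertices lie in $V(H_i)$ minus the (at most two) vertices of the shared edge with $H_{i-1}$. Consider the subgraph $F_i'=F_i\cup\{\text{shared edge}\}$ together with its two endpoints in $H_i$.
\begin{itemize}
\item If $F_i'$ is proper with $v\ge3$, balancedness gives $e(F_i')-1\le\lambda(v(F_i')-2)$.
\item If $F_i'=H_i$, then $e(F_i)\le e(H)-2$ (the shared edges are removed), and so $e(F_i)\le\lambda(v(H)-2)$.
\end{itemize}
In either case the new edges satisfy $e(F_i)-[\text{edges among old vertices}]\le\lambda\cdot(\#\text{new vertices})$. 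A small case check is needed when $F_i$ meets the old part in fewer than two vertices; there we use that $H$ is balanced with $\lambda\ge1$, which follows from $v(H)\ge4$ and the ratio of $K_3$-type subgraphs.

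Summing over $i$ gives $e(F)-1\le\lambda(v(F)-2)$, hence $\rho(H)\le\lambda(H)$. The $i=1$ step is handled by the same computation applied to $F_1\ni e$. Alternatively, one can avoid subgraph bookkeeping with a cruder bound: a height-$h$ ladder gives a max ratio of at most $\lambda+O(1/h)$, letting $h\to\infty$. Either version suffices, since $\rho$ is an infimum.

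\emph{Lower bound via the witness-graph inequality.} For the lower bound, take any $(e,A)\in\mathcal A(H)$. Run the WGA on $G=A$ to obtain a minimal witness graph $W_e\subseteq A$ with $(e,W_e)\in\mathcal A(H)$. Choosing $F=W_e\cup e$ in \eqref{E_rho}, it suffices to show $e(W_e)\ge\lambda(H)(v(W_e)-2)+1$. This is exactly \eqref{E_eWlb} for balanced $H$, which follows from \eqref{E_lamstar} together with $\lambda_*(H)=\lambda(H)$ for balanced $H$ \cite[Lemma 10, Proposition 9]{BK24}.

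The only caveat is the minimum-degree-$\ge2$ hypothesis of \cite[Proposition 9]{BK24}. If $H$ has a vertex of degree at most $1$, then a subgraph $F$ obtained by deleting that vertex has $(e(F)-1)/(v(F)-2)\ge(e(H)-2)/(v(H)-3)>\lambda$ whenever $e(H)-2>0$, contradicting balancedness. So balanced $H$ with $v(H)\ge4$ automatically has minimum degree at least $2$, and the lemma applies. Then $\max_F\ge(e(W_e)+1-1)/(v(W_e)-2)\ge\lambda+1/(v-2)>\lambda$, so the infimum is at least $\lambda$.

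The main obstacle is the upper-bound bookkeeping: showing that no subgraph of a ladder is denser, rooted at $e$, than $\lambda$. If the exact inductive argument gets messy, the fallback is the asymptotic $\lambda+O(1/h)$ bound, which suffices for the infimum.
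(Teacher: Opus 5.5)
\textbf{The upper bound rests on a false step.} Your lower bound is the paper's argument: the WGA-minimal witness graph together with \eqref{E_eWlb}. Your check of the minimum-degree hypothesis is a reasonable addition.

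For the upper bound, the inequality $e(F)-1\le\lambda(v(F)-2)$ cannot hold for all $e\subset F\subseteq L\cup e$. For $F=L\cup e$ one has $e(F)-1=e(L)=\lambda(v(L)-2)+1$. Indeed, your own lower-bound half shows that the maximum in \eqref{E_rho} exceeds $\lambda$ for \emph{every} pair in $\mathcal A(H)$, ladders included.

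The error is in your second bullet. For an intermediate copy $H_i$ the case $F_i'=H_i$ never arises, since $s_i\notin F_i'$. For the terminal copy $H_h$, the one farthest from $e$, only one shared edge is removed. So $F_h'=H_h$ is possible, and then $e(F_h)=e(H)-1=\lambda(v(H)-2)+1$.

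Your bookkeeping therefore only yields $e(F)-1\le\lambda(v(F)-2)+1$, i.e.\ a ratio of at most $\lambda+1/(v(F)-2)$. This is useless for small $F$ that contain all of $H_h\setminus s_{h-1}$. The fallback ``$\lambda+O(1/h)$'' does not avoid bookkeeping: it is exactly this missing step, because $\rho$ maximizes over all $F$, and you give no argument for it. The paper's proof only notes that ladders make \eqref{E_lamrho} an equality, which concerns $F=L\cup e$ alone. So you were right that subgraphs need checking, but the check you give is wrong.

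\textbf{How to repair it.} You need to show that the terminal excess is always paid for unless $F=L\cup e$. Set up the accounting as follows.
Put $s_0=e$, and let $F_i'=F_i\cup s_{i-1}$ with vertex set $(V(F)\cap V(H_i))\cup V(s_{i-1})$. Define $\sigma_i=\lambda(v(F_i')-2)-(e(F_i')-1)$. Since $V(H_i)$ meets the earlier copies exactly in $V(s_{i-1})$, one has $e(F)-1=\lambda(v(F)-2)-\sum_i\sigma_i$. Here $\sigma_i\ge0$ for $i<h$, and $\sigma_h\ge0$ unless $F\supseteq H_h\setminus s_{h-1}$, in which case $\sigma_h=-1$. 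In that case both endpoints of $s_{h-1}$ lie in $F$, because $\delta(H)\ge2$. Now split into two cases.

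\emph{Some endpoint of some $s_{i-1}$ is missing from $F$.} Take the largest such $i$. The missing endpoint $u$ has degree $1$ in $F_i'$, and both endpoints of $s_i$ lie in $F$. Apply balancedness to the non-spanning graph $(F_i'-u)\cup s_i$ (the bound is trivial if it has only two vertices). This gives $\sigma_i\ge\lambda\ge1$.

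\emph{No such endpoint is missing.} Then each $F_i'$ with $i<h$ contains both endpoints of $s_i$. If $F_i'\neq H_i\setminus s_i$, then $F_i'\cup s_i$ is a proper subgraph of $H_i$. Balancedness, or $e(F_i')\le e(H)-2$ if it is spanning, gives $\sigma_i\ge1$.

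Hence $e(F)-1\le\lambda(v(F)-2)$ for every $F\neq L\cup e$. The maximum over a height-$h$ ladder is then exactly $\lambda+1/(h(v(H)-2))$, which tends to $\lambda$.

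\textbf{A degenerate case.} Your proviso $e(H)>2$ is not vacuous. $2K_2$ is balanced in the paper's sense with $\lambda=0$, yet $\rho(2K_2)=1/2$. So the statement itself implicitly needs $e(H)\ge3$.
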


To prove this result, we will require the technical fact, mentioned above,  
that if $H$ is balanced then 
any witness graph $W$ has at least $\lambda(H)(v(W)-2)+1$ edges. 
Let us briefly sketch a proof (see \cite{BK24} for details) of this fact:
\begin{itemize}[nosep]
\item First, we slow down the $H$-dynamics so that in each step 
a single copy of $H$ is completed, and suppress 
the steps of WGA that do not contribute to $W$. This is called 
the {\it red edge algorithm (REA)} in \cite{BBM12}. 
\item Then, at any given time,  
the edges in $W$ (the black edges in REA) and those 
so far added to $W$ (the red edges in REA) by the $H$-dynamics
can be partitioned into a collection of edge-disjoint 
components, where the 
edges in two copies of $H$ are in the same component 
if the copies share an edge. 
\item By induction, it can be shown that 
each 
component $C$ has at least $\lambda(H)(v(C)-2)+1$ 
edges in $W$ (i.e., black edges). 
This proves the claim, since by the end of the REA 
there is only one component, the 
witness graph $W$. 
\item In the inductive step, suppose that a 
copy $H'$ of $H$ merges together with some number of 
components  $C_1,\ldots,C_m$ 
to form a new
component $C$. 
If $C_i$ has $v_i\ge 2$
vertices in common with $H'$ then, 
since $H$ is balanced, they have at most 
$\lambda(v_i-2)+1$ edges in common (see \eqref{E_lam} above), 
so the loss of vertices and edges caused by the 
overlap compensates for one another. 
\end{itemize}

\begin{proof}[Proof of Proposition \ref{P_rhoB}]
Suppose that $(e,A)\in{\mathcal A}(H)$. The WGA
gives an inclusion-minimal subgraph $W\subseteq A$ for 
which $(e,W)\in{\mathcal A}(H)$. 
Since $H$ is balanced,  
$e(W)\ge \lambda(H)(v(W)-2)+1$. 
Since $W$ contains the vertices in $e$ but not the edge $e$ itself,
the graph $F=W\cup e$ satisfies 
\begin{equation}\label{E_lamrho}
\frac{e(F)-1}{v(F)-2}
=\frac{e(W)}{v(W)-2}
\ge \lambda(H)+\frac{1}{v(W)-2}.
\end{equation}
Since there are, e.g., arbitrarily large
$H$-ladders for which \eqref{E_lamrho} is an equality, 
it follows, taking an infimum over all $(e,A)\in{\mathcal A}(H)$, 
that 
$\rho(H)=\lambda(H)$. 
\end{proof}

\subsection{Questions}\label{S_Qs}

The expression \eqref{E_rho} above for $\rho(H)$ 
is rather abstract. 
At this level of generality, a 
simpler expression may not exist. 
Currently, the most interesting 
example of a unbalanced 
graph $H$ for which the critical exponent has been  
explicitly identified
is the result 
that $p_c(n,K_{2,4})=\Theta(n^{-10/13})$
 by 
Bidgoli, Mohammadian, and Tayfeh-Rezaie
\cite{BMTR21}.

In recent work of 
Balister, Bollob\'as, Morris, and Smith
\cite{BBMS25}
it has been announced that forthcoming work 
will show that the exponent in $p_c$
for various monotone cellular automata
on Euclidean grids/lattices is uncomputable in general. 
This leads us to ask the following question. 

\begin{question}
Is  $\rho(H)$ computable
for general $H$? 
\end{question}

Although we do not know if $\rho(H)$ can be defined effectively, 
we also ask the following question. 

\begin{question}
Is $\rho(H)$ rational for all $H$?
\end{question}

A somewhat related question on possible values of the limit $\mathrm{wsat}(n,H)/n$ was asked 
by Terekhov and the sixth author \cite{TZ-combi}. In particular, it is known that this limit can be fractional 
(and therefore linear algebraic methods give suboptimal results; see \cite{TZ-matroids} by the same authors), 
but it is not known whether the limit is always rational. Some progress towards describing the 
possible values of the limit was recently achieved by Ascoli and He \cite{AH26}.

Bartha and the first author \cite{BK24}
have shown that 
$p_c(n,H)=O(n^{-1/\rho(H)})$
for all strictly balanced graphs 
(in which case, by Proposition \ref{P_rhoB} above, 
$\rho(H)=\lambda(H)$). 
It is known 
(see \cite[\S5]{BBM12} and Section \ref{S_leaf} below) 
that $p_c(n,H)=\Theta(n^{-1/\rho(H)})$
for all graphs $H$ with a {\it leaf} (i.e., a vertex of degree 1).  
We are not aware of any graphs $H$ for which 
$\rho(H)>1$ and 
$p_c(n,H)\gg n^{-1/\rho(H)}$. 

\begin{question}\label{Q_nolog}
Does the upper bound 
\[
p_c(n,H)=O(n^{-1/\rho(H)})
\]
hold for all graphs $H$ with $\rho(H)>1$?
\end{question}

On the other hand, as
shown in Section~\ref{S_rho1} below, 
if $\rho(H)=1$
and the minimum degree $\delta(H)=2$
then $p_c(n,H)\sim (\log n)/n$.

Finally, let us define 
\begin{equation}\label{E_pceps}
p_\eps(n,H)=\inf\{p>0:\P(\GnpH=K_n)\ge\eps\}.
\end{equation}
Observe (see \eqref{E_pc} above) that $p_c(n,H)=p_{1/2}(n,H)$. 
We say that the $H$-percolation threshold is {\it sharp}
if, for all small $\eps>0$,
\begin{equation}\label{E_sharp}
p_{1-\eps}(n,H)-p_\eps(n,H)\ll p_c(n,H),
\end{equation}
and call it {\it coarse} otherwise. 
Problem 2
in \cite{BBM12} asks when 
the $H$-percolation threshold is sharp in this sense. 
We believe that sharpness is 
related to whether the critical $H$-activation density is
ever attained for finite $A$. 

\begin{question}\label{C_sharp}
Is the $H$-percolation threshold sharp 
if and only if 
\begin{equation}\label{E_sharp}
\max_{e\subset F\subseteq A\cup e}\, \frac{e(F)-1}{v(F)-2}>\rho(H),
\end{equation}
for all $(e,A)\in{\mathcal A}(H)$?
\end{question}

The $K_r$-percolation threshold
is sharp for all cliques.
It is shown in \cite{BBM12} that 
$p_c(n,K_4)=\Theta((n\log n)^{-1/2})$
and, 
as noted in \cite{BBM12}, this result together with Friedgut \cite[Theorem 1.4]{Fri99}
implies sharpness. 
Further work \cite{AK18,AK21,Kol22} shows that 
$p_c(n,K_4)\sim(3n\log n)^{-1/2}$. 
Recent work by 
Bartha, the first author, Kronenberg, and Peled has shown, 
for all $r\ge5$, that  
$p_c(n,K_r)\sim\gamma_rn^{-1/\lambda(K_r)}$, where 
$\gamma_r$ is described in terms of the 
$({r\choose2}-1)$th
Fuss--Catalan numbers.
Note that, for $r\ge5$, there is no longer a polylogarithmic 
term in the asymptotics for $p_c(n,K_r)$, but nonetheless
the threshold continues to be  sharp. 
Since cliques are balanced it follows (see  
Proposition \ref{P_rhoB} and 
\eqref{E_lamrho} above) that \eqref{E_sharp}
holds when $H=K_r$, in line with 
Question \ref{C_sharp}. 

On the other hand, the $H$-percolation threshold is 
coarse, e.g., for all $H$ with a leaf; 
see Section \ref{S_leaf} below.

\subsection{Acknowledgements}

XPG was supported in part by NSF grant DMS-2201590.
PP was supported in part by NSERC DG grant RGPIN-2022-03804.
This project began at the MATRIX 
event 
{\it Combinatorics of McKay and Wormald}
in June 2025. 
We thank the organizers, Jane Gao, 
Catherine Greenhill, 
Mikhail Isaev, 
Anita Liebenau,  
Ian Wanless, 
and the MATRIX staff. 
This work is dedicated to the lasting influence of 
Brendan McKay and Nick Wormald
on the authors.

\section{Lower bound when $\rho(H)>1$}
\label{S_LB}

In this section, we prove the following result; cf.\ \cite[Proposition 8]{BBM12}
for the case $H=K_r$. 

\begin{thm}\label{T_mainLB}
Suppose that $\rho=\rho(H)>1$
and $n p^\rho \log^\rho n=\eps$.  
Then, for any sufficiently small $\eps>0$, we have 
that 
$\GnpH\neq K_n$ with high probability. 
\end{thm}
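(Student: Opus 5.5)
Throughout, $n p^\rho \log^\rho n=\eps$ with $\rho>1$. The plan is an Aizenman--Lebowitz-type argument applied to \emph{densest parts} of witness graphs rather than to the witness graphs themselves.

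For a graph $G$ and an edge $f\in\langle G\rangle_H\setminus G$, let $W_f$ be the witness graph produced by the WGA. Let $D_*(f)$ be the maximal densest part of $W_f$. This is well defined because densest parts containing $e$ are closed under union: if $F_1,F_2\supseteq f$ both attain the maximum ratio $r$, then supermodularity of $e(\cdot)-1-r(v(\cdot)-2)$ on sets containing $f$ shows that $F_1\cup F_2$ attains it as well. By the definition \eqref{E_rho}, the graph $F=D_*(f)\cup f$ satisfies
\[
\frac{e(F)-1}{v(F)-2}\ge\rho ,
\qquad\text{so}\qquad
e(D_*(f))\ge \rho\,(v(D_*(f))-2).
\]
The subgraph $D_*(f)$ is a subgraph of $\Gnp$ that, together with the two vertices of $f$, spans $k=v(D_*(f))-2$ extra vertices and carries at least $\rho k$ edges.

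The first step is a first-moment bound. The expected number of such structures with exactly $k$ extra vertices is at most
\[
n^{k+2}\,C^{k}\,p^{\rho k}\;\le\; n^2\bigl(C\eps/\log^\rho n\bigr)^{k}.
\]
Here the factor $C^{k}$ counts the choices of the graph on $k+2$ labelled vertices with about $\rho k$ edges. Summing over $k$ in a window $[\gamma\log n,\, e(H)\gamma\log n]$ gives $o(1)$ once $\eps$ is small. The danger is that the edge count is not bounded by $\rho k$ from above, so the number of possible graphs could be larger. To handle this, I will pass to a subgraph with exactly $\lceil\rho k\rceil$ edges and use the bound $\binom{\binom{k+2}{2}}{\rho k}\le (Ck)^{\rho k}$. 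This gives the term $(Ck)^{\rho k}n^{k}p^{\rho k}=\bigl(Ck\,n^{1/\rho}p\bigr)^{\rho k}$. With $k\asymp\log n$ this equals $(C'\eps^{1/\rho})^{\rho k}$, which is exactly why the $\log^\rho n$ factor appears in the hypothesis. Small $k$ is handled separately. For $k\le\gamma\log n$ and a fixed edge $e$, summing over $k$ gives probability $o(1)$ that $e$ has a witness with densest part of size $\le\gamma\log n$; here the $n^2$ factor is absent because $e$ is fixed. I also need that the total number of vertices is controlled, not only the densest part. This is where maximality enters: every subgraph containing $e$ has ratio $\le$ that of $D_*$, and I will exploit that below.

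The key step, which I expect to be the main obstacle, is the growth lemma. If $f$ is activated via a copy $H_f$ with other edges $g_1,\dots,g_m$, $m\le e(H)-1$, then
\[
v(D_*(f))-2\;\le\;\sum_i\bigl(v(D_*(g_i))-2\bigr)+v(H)\quad\text{(or similar)},
\]
so the maximum size of a $D_*$ grows by at most a factor of $e(H)$ per step. To prove this, I will write $W_f=\bigcup W_{g_i}$ and take $D=D_*(f)$. Then $D\cap W_{g_i}$ together with the vertices of $g_i$ must be no denser relative to $g_i$ than $D_*(g_i)$ is. Combining this with the fact that $D$ is densest for $f$, I aim to show that $D\subseteq\bigcup_i D_*(g_i)\cup V(H_f)$. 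The idea is that the part of $D$ outside $\bigcup D_*(g_i)$ lies in the sparser regions of the $W_{g_i}$, so removing it would not lower the density. Making this exchange argument rigorous, with the rooted-density supermodularity and the bookkeeping for overlapping $W_{g_i}$, is the delicate part. A weaker version that produces \emph{some} dense subgraph of size in the window would suffice, and I would first try to prove that.

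Granting the growth lemma, I conclude as follows. If $\GnpH=K_n$, pick an edge $e$. Either $e$ has a witness whose maximal densest part has size $\le\gamma\log n$, which has probability $o(1)$ by the first-moment bound for fixed $e$ (using that any dense part of size $k$ occurs with probability $(C\eps)^{k}$-small). Or, following the WGA chain down from $e$, some $D_*$ has size in $[\gamma\log n, e(H)\gamma\log n]$, which has probability $o(1)$ by the first-moment bound over all $f$. Hence $\GnpH\neq K_n$ with high probability.
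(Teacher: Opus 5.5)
\textbf{There is a genuine gap: the growth lemma is unproved, and the exchange argument you sketch for it does not work.} Your first-moment computations and the overall Aizenman--Lebowitz scheme are fine and match the paper. But the growth lemma is the heart of the proof, as you say yourself.

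Maximality of $D_*(g_i)$ only tells you that the part of $W_{g_i}$ outside $D_*(g_i)$ has relative density below $r_{g_i}$, the optimal ratio for $g_i$. Maximality of $D_*(f)$ forces $D_*(f)$ to absorb every piece whose relative density is at least $r_f$, the optimal ratio for $f$. These thresholds differ, and typically $r_f<r_{g_i}$. Already for $H$-ladders the optimal ratio $\lambda(H)+1/(v(W)-2)$ decreases as the witness grows. So a piece of $W_{g_i}\setminus D_*(g_i)$ with relative density in $[r_f,r_{g_i})$ lands in $D_*(f)$ but in no $D_*(g_i)$. Nothing in your argument bounds its size, so neither the containment $D_*(f)\subseteq\bigcup_i D_*(g_i)\cup V(H_f)$ nor the size bound follows.

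The weaker statement you would fall back on also cannot be obtained from local data. Consider the union of $f$ and the $D_*(g_i)\cup g_i$. It picks up the $v(H)-2$ new vertices of $H_f$, which are supported only by the black edges of $H_f$. Moreover, the $D_*(g_i)$ may overlap in dense pieces that are counted only once. The ratios of the pieces alone therefore cannot certify that the union still has rooted ratio at least $\rho$. What is needed is a global appeal to $\rho(H)$ being an infimum over \emph{all} activating pairs, applied to a graph that need not sit inside $G$.

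This is exactly the paper's idea.
\begin{itemize}
\item It replaces the varying optimal ratio by the fixed threshold $\rho=\rho(H)$. It takes $F_e$ to be an inclusion-maximal $e\subset F_e\subseteq W_e\cup e$ with $\rho(e,F_e)\ge\rho$.
\item Alongside $F_e$ it carries an auxiliary graph $B_e\supseteq F_e$ such that $(e,B_e\setminus e)\in\mathcal A(H)$, and either $B_e=F_e$ or $\rho^{\max}(F_e,B_e)<\rho$.
\item In the inductive step it sets $F'_e=e\cup\bigcup_f(F_f\setminus f)$. It forms $B_e$ by gluing \emph{disjoint} copies of $B_f\setminus F_f$ onto $F'_e$.
\item Then $B_e\setminus e$ still activates $e$, so $\rho^{\max}(e,B_e)\ge\rho$ by the definition of $\rho(H)$. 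The glued parts are $\rho$-sparse over $F'_e$, so Lemma~\ref{L_rho_comb} forces $\rho^{\max}(e,F'_e)\ge\rho$.
\item Hence $F_e$ can be chosen inside $F'_e$, which gives $v(F_e)-2\le\sum_f v(F_f)$ deterministically.
\end{itemize}
The fixed threshold and the disjoint gluing are what stop the sparse outer parts from interacting with each other or with the new copy of $H$. That interaction is precisely what breaks your exchange argument inside $G$.

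Finally, you do not need to control $v(W_e)$ at all: the union bound only concerns the dense part.
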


Throughout this section, we will assume that 
$\rho(H)>1$. In particular, this implies that 
$e(H)\ge v(H)$.

For graphs $A\subset B$, let 
\[
\rho(A,B)=\frac{e(B)-e(A)}{v(B)-v(A)}
\]
and put 
\[
\rho^{\rm max}(A,B)
=\max_{A\subset B'\subseteq B}
\rho(A,B'). 
\]
Note that 
\begin{equation}\label{e_rhomax}
\rho(H)
=\inf_{(e,\,A)\in{\mathcal A}(H)}
\rho^{\rm max}(e,A\cup e).
\end{equation}

The following observations are straightforward. We omit the proof.

\begin{lemma}\label{L_rho_comb}
Let $A\subset B\subset C$. Then, for any $\alpha>0$, 
\begin{enumerate}[nosep]
\item if $\rho(A,B)\ge\alpha$ and $\rho(B,C)\ge\alpha$ then $\rho(A,C)\ge\alpha$;
\item if $\rho(A,B)<\alpha$ and $\rho(B,C)<\alpha$ then $\rho(A,C)<\alpha$.
\end{enumerate}
\end{lemma}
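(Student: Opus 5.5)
The plan is to reduce both parts of Lemma~\ref{L_rho_comb} to the elementary mediant inequality for fractions with positive denominators. Write
\[
x_1=e(B)-e(A),\quad y_1=v(B)-v(A),\qquad x_2=e(C)-e(B),\quad y_2=v(C)-v(B).
\]
In the setting where $\rho(A,B)$ and $\rho(B,C)$ are defined, we have $y_1,y_2>0$. Since edge and vertex counts are additive along the chain $A\subset B\subset C$,
\[
\rho(A,C)=\frac{x_1+x_2}{y_1+y_2}.
\]

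For part (1), the hypotheses $x_1/y_1\ge\alpha$ and $x_2/y_2\ge\alpha$ mean $x_1\ge\alpha y_1$ and $x_2\ge\alpha y_2$, because the denominators are positive. Adding these gives $x_1+x_2\ge\alpha(y_1+y_2)$. Dividing by $y_1+y_2>0$ gives $\rho(A,C)\ge\alpha$.

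Part (2) is the same argument with strict inequalities. Here $x_1<\alpha y_1$ and $x_2<\alpha y_2$, so the sum satisfies $x_1+x_2<\alpha(y_1+y_2)$, which gives $\rho(A,C)<\alpha$.

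The only step needing care is the degenerate case where $B$ and $A$, or $C$ and $B$, have the same vertex set, so that a denominator vanishes. The lemma implicitly assumes these ratios are defined, i.e.\ $v(A)<v(B)<v(C)$. I would add this as a standing convention. Equivalently, one can read each hypothesis $\rho(\cdot,\cdot)\ge\alpha$ in the cleared form $x\ge\alpha y$, and then the additive argument goes through unchanged. The hypothesis $\alpha>0$ is not actually needed for the argument.
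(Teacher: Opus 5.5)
Your proof is correct. The paper omits it as straightforward, and additivity of $e(\cdot)$ and $v(\cdot)$ along $A\subset B\subset C$ combined with the mediant inequality is exactly the intended argument. Your remark on degenerate denominators is also appropriate: reading the hypotheses in the cleared form $x\ge\alpha y$ matches how the paper phrases $\alpha$-dense and $\alpha$-sparse pairs, and under that reading the argument goes through unchanged.
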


\begin{lemma}\label{L_Fs}
Let $A$ be a graph. 
For each edge $e$ in $A$, put $W_e=F_e=e$. 
Recall that, for all other edges $e$ in 
$\langle A\rangle_H\setminus A$, the WGA 
identifies a copy $H_e$ of $H$ completed by $e$ and  
an inclusion-minimal 
subgraph 
\[
W_e=\bigcup_{f\in E(H_e\setminus e)}W_f\subset A
\]
for which $(e,W_e)\in{\mathcal A}(H)$. 
Then, for each such $e$, 
there is some  $e\subset F_e\subseteq W_e\cup e$, for which 
$\rho(e,F_e)\ge\rho(H)$ and 
\begin{equation}\label{E_Fexp}
v(F_e)-2\le\sum_{f\in E(H_e\setminus e)} v(F_f). 
\end{equation}
\end{lemma}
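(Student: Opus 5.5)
The plan is to build the $F_e$ by induction along the WGA, strengthening the hypothesis: for every $e\in\langle A\rangle_H\setminus A$ we let $F_e$ be the \emph{inclusion-maximal} subgraph $e\subset F_e\subseteq W_e\cup e$ with $\rho(e,F_e)\ge\rho(H)$.

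\emph{Existence.} By \eqref{e_rhomax}, $\rho^{\max}(e,W_e\cup e)\ge\rho(H)$, so such subgraphs exist. We would check, using supermodularity of $e(\cdot)$ together with Lemma~\ref{L_rho_comb}(1), that the union of two of them is again one of them. Hence a unique maximal one exists.

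\emph{Key consequence of maximality.} For every $F_e\subsetneq F'\subseteq W_e\cup e$ we get $\rho(F_e,F')<\rho(H)$. Indeed, otherwise Lemma~\ref{L_rho_comb}(1) gives $\rho(e,F')\ge\rho(H)$, contradicting maximality. For the base case $W_f=F_f=f$ this property is vacuous.

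Now fix $e$ with copy $H_e$. Put $S=\bigcup_{f\in E(H_e\setminus e)}V(F_f)$. Every vertex of $H_e$, including the endpoints of $e$ since $H$ has no isolated vertices, lies on some edge $f$ of $H_e\setminus e$ and hence in $S$. Therefore $|S|\le\sum_f v(F_f)$, and it suffices to prove that $V(F_e)\subseteq S$; this gives \eqref{E_Fexp}, even with room to spare.

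To prove $V(F_e)\subseteq S$, we argue by contradiction using the maximality property of each $F_f$. Let $F=F_e$ and $F^S=F[S]$, the subgraph induced on $S$, which still contains $e$. We show that $\rho(F^S,F)<\rho(H)$ whenever $F\neq F^S$. The edges of $F$ not in $F^S$ all lie in $\bigcup_f W_f$, and each is incident to a vertex outside $S$. Order the $f$'s and add the pieces $F\cap(W_f\cup f)$ one at a time. Adding the piece for $f$ to the current graph is, after discarding what already lies in $F_f\cup(\text{previous pieces})$, an extension of $F_f$ inside $W_f\cup f$. By the maximality of $F_f$, the density of such an extension relative to the vertices it introduces is $<\rho(H)$. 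Extensions by different $f$ may overlap, and an overlap only removes vertices that have already been paid for together with their edges. So chaining these steps via Lemma~\ref{L_rho_comb}(2) gives $\rho(F^S,F)<\rho(H)$.

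Combined with $\rho(e,F)\ge\rho(H)$, this forces $\rho(e,F^S)\ge\rho(H)$, by the contrapositive of Lemma~\ref{L_rho_comb}(1)--(2) applied to $e\subset F^S\subset F$. Since $F^S\subseteq F$, $F^S$ is then itself a valid choice. A short exchange argument should then show that $F\setminus F^S$ is empty, giving $V(F_e)\subseteq S$. Alternatively, we may simply redefine $F_e:=F^S$; this already satisfies both required conclusions.

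The main obstacle is the chaining step above. The pieces $F\cap(W_f\cup f)$ for different $f$ overlap, and the edges $f$ themselves are not in $W_e$. One must verify that each incremental extension really is of the form $F_f\subsetneq F'\subseteq W_f\cup f$ relative to the vertices it newly adds. I would first try doing the bookkeeping vertex by vertex: assign each vertex outside $S$ to the first $f$ whose $W_f$ contains it. Then check that the edges charged to it satisfy the strict $<\rho(H)$ bound coming from maximality. If this fails, the fallback is to define $F_e:=F^S$ directly and only verify that $\rho(e,F^S)\ge\rho(H)$, which is all the lemma requires.
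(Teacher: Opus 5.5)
There is a genuine gap, and it is at the step you yourself flag as the main obstacle. The inequality $\rho(F^S,F)<\rho(H)$ does not follow from the maximality of the $F_f$ inside $W_f\cup f$. That maximality only controls extensions of $F_f$ inside the single graph $W_f\cup f$, and it counts every vertex outside $V(F_f)$ as new. The $W_f$, however, can overlap arbitrarily outside the $F_f$. Two things go wrong.

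(i) A vertex $y\notin S$ can have $k_1<\rho(H)$ edges into $V(F_{f_1})$ inside $W_{f_1}$, and $k_2<\rho(H)$ edges into $V(F_{f_2})$ inside $W_{f_2}$, with $k_1+k_2\ge\rho(H)$. Neither $F_{f_i}$ loses maximality. Yet $y$ with these edges is an extension of density at least $\rho(H)$ of any $F^S$ containing its neighbours. In your chaining, once the $f_1$-piece has introduced $y$, the $f_2$-piece adds $k_2$ edges and no new vertex, so Lemma~\ref{L_rho_comb}(2) cannot be applied to it. An overlap removes the vertex from the later piece but not its edges, contrary to what you assert.

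(ii) A vertex $s\in S\setminus V(F_f)$, coming from some other $F_{f'}$, may lie in $W_f$. Maximality of $F_f$ charges $s$ as a new vertex, so it does not bound the edges of $W_f$ from a new vertex $y$ to $s$. For example, let $s$ have no edges to $F_f$, and let $y$ have $\lceil\rho\rceil-1$ edges to $F_f$ plus the edge $ys$. This is allowed when $\rho>1$, yet $y$ then sends $\lceil\rho\rceil\ge\rho$ edges into $S$.

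Nothing in your hypotheses excludes these configurations. So neither $V(F_e)\subseteq S$ nor the vertex-by-vertex charging is justified. The fallback $F_e:=F^S$ needs exactly the same inequality, both to get $\rho(e,F^S)\ge\rho(H)$ and to keep a maximality-type invariant for the next inductive step.

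The missing idea is the paper's unfolding: do not tie the sparse complement of $F_f$ to $W_f$.
\begin{itemize}
\item Carry through the induction an auxiliary graph $B_f\supseteq F_f$, not necessarily inside $A$, with $(f,B_f\setminus f)\in\mathcal{A}(H)$ and either $B_f=F_f$ or $\rho^{\max}(F_f,B_f)<\rho(H)$.
\item At step $e$, put $F_e'=e\cup\bigcup_f(F_f\setminus f)$. Build $B_e$ by gluing \emph{pairwise disjoint} copies of $B_f\setminus F_f$ onto the respective $F_f$. There are now no overlaps, so $\rho^{\max}(F_e',B_e)<\rho(H)$ is a plain sum over the attached pieces.
\item Since $B_e\setminus e$ still activates $e$, the infimum defining $\rho(H)$ gives $\rho^{\max}(e,B_e)\ge\rho(H)$. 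Lemma~\ref{L_rho_comb}(2) then forces $\rho^{\max}(e,F_e')\ge\rho(H)$.
\item Taking $F_e$ inclusion-maximal inside $F_e'$ gives \eqref{E_Fexp}, and Lemma~\ref{L_rho_comb}(1) re-establishes the invariant.
\end{itemize}
Your plan applies the definition of $\rho(H)$ only to $W_e$ itself. Applying it to a witness graph outside $A$ is the step you need.

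Two smaller points. First, the family $\{F:\rho(e,F)\ge\rho\}$ is not closed under unions. A very dense common core can make the union sparse: take $K_5\ni e$ with two disjoint sets of nine pendant vertices when $\rho=3/2$. This is harmless, since you only need some inclusion-maximal element. Second, if $e$ is a pendant edge of $H_e$, one endpoint of $e$ lies on no $f$. That is why the bound carries the $-2$, and why $S$ should include $V(e)$.
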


The ``$-2$'' in \eqref{E_Fexp} 
compensates for the vertices in $e$, 
as they may not be in any of the edges $f\in E(H_e\setminus e)$. 

\begin{proof}
We will prove this claim by induction, 
on the time at which an edge is activated  
by the $H$-dynamics starting with $A$.
In doing so, it will be helpful to also establish, 
for each edge $e$ in $\langle A\rangle_H\setminus A$, 
the existence of some $B_e$ such that either (1) $F_e=B_e$ or else 
(2) $F_e \subset B_e$ and
$\rho^{\rm max}(F_e,B_e)<\rho(H)$, 
and in either case $(e,B_e\setminus e)\in{\mathcal A}(H)$. 
Let us stress  that $B_e\setminus e$ is not necessarily a subgraph of $A$,
but is rather a technical device that will be used in the proof. 

For an edge $e$ added at time $t=1$, 
we simply take $B_e=H_e$
and let $F_e$ be an inclusion-maximal subgraph 
$e\subset F_e \subseteq H_e$
for which $\rho(e,F_e)\ge \rho(H)$. 
We note that \eqref{E_Fexp} holds, since $\rho(H)>1$
and so, as noted above, $e(H)\ge v(H)$. 
If $F_e\neq B_e$ then, by  
Lemma \ref{L_rho_comb}(1), 
it follows 
by the choice of $F_e$ 
that $\rho^{\max}(F_e,B_e)<\rho(H)$. 

Next, if some $e$ is added at time $t>1$, 
then all $f\in E(H_e\setminus e)$
were added by time $t-1$, 
so the induction hypothesis applies to these edges. 
To begin, we put 
\[
F_e'=e\cup \bigcup_{f\in E(H_e\setminus e)} F_f\setminus f, 
\]
and obtain $B_e$ from $F_e'$ by attaching a disjoint copy 
of $B_f\setminus F_f$ to each $F_f\setminus f$ above. 
By construction, $(e,B_e\setminus e)\in{\mathcal A}(H)$. 

First, note that if all the $B_f=F_f$ then $B_e=F_e'$. 
In this case, since $(e,B_e\setminus e)\in{\mathcal A}(H)$, 
we can take $F_e$ to be an 
inclusion-maximal subgraph
$e\subset F_e\subseteq F_e'$ satisfying $\rho(e,F_e)\ge\rho(H)$. 

On the other hand, suppose that at least one of the $B_f\neq F_f$. 
Then, by construction, $\rho^{\rm max}(F_e',B_e)<\rho(H)$. 
We claim that $\rho^{\rm max}(e,F_e')\ge\rho(H)$. 
Indeed, otherwise it would 
follow 
by Lemma \ref{L_rho_comb}(2)
that $\rho^{\rm max}(e,B_e)<\rho(H)$, in contradiction 
to $(e,B_e\setminus e)\in{\mathcal A}(H)$. 
To conclude, we let $F_e$ be an inclusion-maximal subgraph
$e\subset F_e\subseteq F_e'$ satisfying $\rho(e,F_e)\ge\rho(H)$.
Once again, by Lemma \ref{L_rho_comb}(1), 
it follows that $\rho^{\max}(F_e,B_e)<\rho(H)$. 
\end{proof}

The previous lemma has the following 
useful consequence. 

\begin{corollary}
Let $A$ be a graph. 
Consider the graphs $F_e$ in 
Lemma \ref{L_Fs}. Suppose that, for some $e$ in $\langle A\rangle_H\setminus A$, 
we have $v(F_e)>v(H)e(H)$. 
Then, for some $f$ that is added to $A$ before $e$ during the WGA, 
we have 
\[
v(F_e)/e(H)
\le v(F_f)<v(F_e).
\]
\end{corollary}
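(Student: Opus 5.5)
The plan is to descend along the witness graph algorithm using \eqref{E_Fexp} from Lemma~\ref{L_Fs}, and stop at the first edge whose $F$ falls below $v(F_e)$.

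Fix $e$ with $v(F_e)>v(H)e(H)$ and write $v_0=v(F_e)$. Since $v_0>2$, the edge $e$ is not in $A$, because edges $f\in A$ have $F_f=f$ and $v(F_f)=2$. So $e$ was added at some time $t\ge1$ through a copy $H_e$, and \eqref{E_Fexp} gives
\[
v_0-2\le \sum_{f\in E(H_e\setminus e)} v(F_f).
\]
The sum has $e(H)-1$ terms. We then split into two cases.

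In the first case, some $f\in E(H_e\setminus e)$ has $v(F_f)\ge v_0$. Then $f$ was activated strictly before $e$ and has at least as large an $F$. Apply the same step to $f$, and continue. Along this chain the activation times strictly decrease, so the process must terminate. It can only terminate in the second case, because an edge of $A$ has $v(F)=2<v_0$ and so can never be the edge we move to.

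The second case, and the one where the chain ends, is an edge $g$ added before (or equal to) $e$ with $v(F_g)\ge v_0$, all of whose predecessors $f\in E(H_g\setminus g)$ satisfy $v(F_f)<v_0$. Applying \eqref{E_Fexp} to $g$ gives
\[
v_0-2\le v(F_g)-2\le \sum_{f} v(F_f),
\]
so some $f$ satisfies $v(F_f)\ge (v_0-2)/(e(H)-1)$. This $f$ precedes $g$, and hence precedes $e$ in the WGA, and $v(F_f)<v_0=v(F_e)$.

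It remains to check that $(v_0-2)/(e(H)-1)\ge v_0/e(H)$. This is equivalent to $e(H)(v_0-2)\ge (e(H)-1)v_0$, i.e.\ $v_0\ge 2e(H)$. That holds because $v_0>v(H)e(H)\ge 2e(H)$. The only point I expect to need care is the termination of the descent chain in the first case. Monotonicity of activation times handles it, and each chosen $f$ is added before $e$ because each step moves to an edge of $H_g\setminus g$, all of which are active at time $t-1$.
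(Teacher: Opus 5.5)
Your proof is correct and is essentially the paper's argument. Both rest on \eqref{E_Fexp} plus pigeonholing over the $e(H)-1$ edges of $H_g\setminus g$: the paper phrases it as ``the maximum $v(F_f)$ defined so far grows by at most a factor $e(H)$ per time step of the WGA,'' while you follow a chain of predecessors of $e$, and your explicit check that $v_0\ge 2e(H)$ handles the ``$-2$'' the paper's sketch leaves implicit (it also shows your $f$ has $v(F_f)>v(H)\ge 2$, so it is a genuinely added edge rather than an edge of $A$).
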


\begin{proof}
This follows noting, by \eqref{E_Fexp} in Lemma \ref{L_Fs}, 
that in each time step of the WGA
the maximum size over all $F_e$ defined so far increases by 
at most a factor of $e(H)$. 
Therefore, if the WGA eventually defines an $F_e$ of size $v(F_e)$, 
then there must have previously 
been defined some smaller $F_f$ of size at least $v(F_e)/e(H)$. 
\end{proof}

We are ready to prove the main result of this section. 
This proof is inspired by the proof of Proposition 8 in \cite{BBM12}. 
The crucial difference, however, is that we look at the 
dense parts $F$ of witness graphs $W$, as given by 
Lemma \ref{L_Fs} above, rather than the witness graphs
$W$ themselves. 

\begin{proof}[Proof of Theorem \ref{T_mainLB}]
Suppose that $np^\rho \log^\rho n=\eps$, 
for a sufficiently small constant $\eps>0$, 
as determined below. 

We apply the WGA to $\Gnp$
and use the graphs $F_e$ in Lemma \ref{L_Fs}.
Suppose that some given $e$ is in $\GnpH$.  Then either:
\begin{enumerate}[nosep]
\item $v(F_e)\le e(H)\log n$; or else  
\item some edge $f$  satisfies $\log n\le v(F_f)\le e(H)\log n$. 
\end{enumerate} 

In the first case, there would be some graph $F$ containing the 
vertices of $e$ and $k\le e(H)\log n$ many other vertices
and at least $\rho k+1$ edges. Taking a union bound, 
the probability of this event is at most 
\begin{align*}
\sum_{k=0}^{e(H)\log n} n^k {(k+2)^2\choose \rho k+1}p^{\rho k+1}
&\le O(p \log n) \sum_{k=0}^{e(H)\log n}(Cnp^\rho \log^\rho n)^k\\
&= O(p \log n) \sum_{k=0}^{\infty}(C\eps)^k, 
\end{align*}
for some constant $C$ depending only on $H$.
For any small enough $\eps>0$, the above expression 
is $O(p\log n)\to0$
as $n\to\infty$. 

Likewise, in the second case, the probability is bounded by 
\[
O(pn^2\log n) \sum_{k=\log n}^{e(H)\log n}(C\eps)^k
= O(n^{2-1/\rho+\log (C\eps)}\log^2 n)\to0, 
\]
for any small enough $\eps>0$. 
\end{proof}

\section{Upper bound when $\rho(H)>1$}
\label{S_UB}

In this section, we will prove the following result. 

\begin{thm}\label{T_UB}
Fix a graph $H$, with $\rho=\rho(H)>1$. 
If $\rho$ is rational, let $a/b$ be its expression as an irreducible fraction. 
If $\rho$ is irrational, let $b$ be arbitrarily large.  
Suppose that $np^\rho=A\log^{2+2/b} n$.
Then, for large enough $A$, with high probability, we have that 
$\langle\Gnp \rangle_H=K_n$.
\end{thm}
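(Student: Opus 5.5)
The plan is to reduce percolation to activating a single fixed edge $uv$ with probability $1-o(n^{-2})$, and then take a union bound over all pairs.

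\emph{Choice of witness.} Fix $(e,A_*)\in\mathcal A(H)$ with
\[
\rho^{\max}(e,A_*\cup e)<\rho+\frac{1}{\log n},
\]
using \eqref{e_rhomax}. If $\rho=a/b$ is rational, I would first try to argue that we may take $\rho^{\max}(e,A_*\cup e)\le\rho$ up to an additive defect of order $1/v$. Every extension has integer edge and vertex counts, so any density strictly above $\rho$ exceeds it by at least $1/(b\,(v(F)-2))$. This granularity is where the exponent $2/b$ should enter.

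\emph{Unfolding and refolding.} The WGA run on $A_*$ gives a rooted tree of copies of $H$, in which each copy is attached along the edge it activates. I would unfold $A_*$ along this tree into a ``covering'' witness $\tilde A$, in which distinct branches use fresh vertices. Then $\tilde A$ is still a witness for $e$, has no worse $\rho^{\max}$, and is tree-like. Its size may be enormous, possibly much larger than $n$. I would then embed a \emph{folded} version into $\Gnp$ by processing $\tilde A$ as a sequence of small extensions. At each step the goal is to find in $\Gnp$ a rooted copy of a bounded-size extension $(R,S)$ whose density is at most $\rho^{\max}$. Crucially, vertices already used may be reused: we only need a graph $G\subseteq\Gnp$ with $e\in\langle G\rangle_H$, not an injective copy of $\tilde A$. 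So the folded image may identify vertices freely, as long as each required copy of $H$ minus its activated edge is present.

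\emph{Exposure argument.} The process would be run as a branching/exploration in which each required edge $f=xy$ is ``activated'' by finding many candidate extensions rooted at $xy$. For an extension with $k$ new vertices and $\approx\rho k$ new edges, the expected number of copies is
\[
n^kp^{\rho k}=\bigl(A\log^{2+2/b}n\bigr)^k .
\]
To handle correlations, I would choose the extension sizes $k\approx\log n$, or alternatively group $b$ consecutive steps so that the density is exactly achieved with integer counts. Janson's inequality then gives each extension exists with failure probability $\exp(-\Omega(\log^{2}n))$, say. Here the $\log^2$ pays for the union bound over the (at most polynomially many, after folding) distinct rooted extension types needed, and the extra $\log^{2/b}$ pays for the rounding defect over the $b$-periodic structure. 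Since the needed root pairs all lie in $K_n$, only $O(n^2)$ distinct rooted extension events must succeed. Once every pair $xy$ has the property ``$xy$ is activated by a bounded-depth folded witness built from the already-active pairs'', a closure argument shows that the activated set is all of $K_n$.

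\emph{Main obstacle.} I expect the hardest step to be the second one, and specifically controlling the dense subextensions during the Janson computation. The bound $\rho^{\max}<\rho+1/\log n$ only controls extensions rooted at $e$. Intermediate extensions rooted at internal edges of the unfolded witness could be locally denser, which would make $\Delta$ in Janson's inequality too large. My first attempt would be to use the $F_e/B_e$ decomposition of Lemma \ref{L_Fs}, which shows that the parts outside the maximal dense part have $\rho^{\max}<\rho$. These sparse parts can be embedded greedily with room to spare, while the dense parts are exactly where the $\log$ factors are spent.
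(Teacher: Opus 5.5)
There is a genuine gap in your second step, and the repair you propose under \emph{Main obstacle} does not close it.

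\textbf{Unfolding does not preserve $\rho^{\rm max}$.} Suppose each copy of $H$ is glued to its parent only along the edge it activates. With $N$ copies, the unfolded witness has $N(v(H)-2)+2$ vertices and $N(e(H)-2)+1$ black edges. So its density rooted at $e$ exceeds $\lambda(H)=(e(H)-2)/(v(H)-2)$, which can be strictly larger than $\rho(H)$: for $K_{2,4}$, $\lambda=3/2$ but $\rho=13/10$. Near-optimal witnesses gain precisely from overlaps between copies of $H$, so part of $A_*$ must stay folded. Cutting it into pieces rooted at edges then produces pieces denser than $\rho$, as you suspect.

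\textbf{Lemma~\ref{L_Fs} cannot fix this.} At threshold exactly $\rho$ the dense part has no size control. For $H=K_r$ ($r\ge4$) and $A_*$ a $K_r$-ladder of height $h$, the whole ladder has rooted density $\lambda(K_r)+1/(h(r-2))>\rho(K_r)$. Hence the inclusion-maximal $F_e$ is the entire, arbitrarily large, witness, and there is no small dense piece on which to ``spend the log factors''.

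\textbf{Your counting is also off.} Once pieces are rooted at dense subgraphs rather than at edges, you need extensions from $n^{O(\log n)}$ roots over $2^{O(\log^2 n)}$ types. That is not polynomially many types, nor $O(n^2)$ rooted events. Moreover, a sequential exposure along a process with unboundedly many steps (possibly $\gg n$) is not controlled.

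\textbf{The missing idea} is to use a threshold slightly \emph{above} the efficiency of $A^*$. Take $\alpha_*=\rho+(\rho+2)/\log n$.
\begin{itemize}
\item The maximal $\alpha_*$-dense subgraph $C_f^*$ of $W_f^*$ rooted at any red edge $f$ has at most $2\log n$ vertices (Lemma~\ref{L_smallC}). Re-rooting such an $F$ from $f$ to $e^*$ changes $(e(F)-1)/(v(F)-2)$ by only $O(1/v(F))$, while the result must stay below $\rho+1/\log n$.
\item The extension from $C_f^*$ to the modified core $M_f^*=\bigcup_{g\in E(H_f^*\setminus f)}C_g^*$ is $\alpha_*$-sparse (Lemma~\ref{L_sparse}), and $np^{\alpha_*}=\Theta(np^\rho)$.
\end{itemize}
So you keep each core folded, unfold only between modified cores, and embed $(C_f^*,M_f^*)$-extensions rooted at already-embedded cores.

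This requires a single w.h.p.\ universal statement, Lemma~\ref{L_Janson}: every $\alpha_*$-sparse $(S,F)$ with $v(F)\le 2e(H)\log n$ extends from every root. It is proved via Janson with failure probability $e^{-B^2\log^2 n}$. This is where $\log^{2+2/b}n$ enters. The $\log^2 n$ offsets the factor $t^2=O(\log^2 n)$ in the overlap term $(et^2/(xn))^x$. The $\log^{2/b}n$ handles overlaps of $x\in\{b,2b\}$ vertices, where $\rho x$ is an integer.

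With this in place, the recursion is deterministic. It directly gives $A_g\subseteq\Gnp$ with $(g,A_g)\in\mathcal{A}(H)$ for every missing edge $g$, and no exposure or closure argument is needed.
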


Throughout this section,  
we fix some $(e^*,A^*)\in{\mathcal A}(H)$ for which  
\begin{equation}\label{E_alphastar}
\rho^{\rm max}(e^*,A^*\cup e^*)<\rho+1/\log n.
\end{equation}
Such an $A^*$ exists by the definition \eqref{e_rhomax} of $\rho$. 
Furthermore, we let $W^*\subseteq A^*$ denote the witness graph for $e^*$
found by the WGA, as discussed in Section~\ref{S_WGs} above.
By \eqref{E_alphastar}, $W^*$ is an efficient witness graph. 

In proving Theorem \ref{T_UB}, the overall plan is to use 
$W^*$ as a basis to show that all missing edges in $\Gnp$ are 
activated eventually. 
For instance, if $W^*$ was small, with only $v(W^*)=O(\log n)$ many vertices,  say, 
we might naturally try to embed a copy of $W^*$ into $\Gnp$ ``based'' at each 
missing edge $e$
in $\Gnp$ (i.e., with the vertices of $e^*$ mapped to those of $e$). 
However, in appealing to \eqref{e_rhomax}, 
we have no control on $v(W^*)$. 
In particular, $v(W^*)$ could even be much larger than $n$, 
the size of $\Gnp$ itself. 

The main idea behind our proof of  Theorem \ref{T_UB}
is to use suitably ``compressed'' versions of 
$W^*$ to activate all missing edges in $\Gnp$. 
Roughly speaking, this will be achieved
by ``unfolding'' $W^*$ and then
``folding'' it into $\Gnp$ by recursively embedding a series of small extensions. 
Crucially, the ``unfolded'' version of $W^*$ preserves 
the local structure of $W^*$, so that
activation patterns at all stages of the process are maintained. 
Indeed, the ``unfolded'' version of $W^*$ can be viewed as its covering graph, and we will show that there exists a covering map from this ``unfolded'' version 
of $W^*$ into $\Gnp$ that is based at each missing edge of $\Gnp$.

\subsection{Extensions and cores}
The first step towards our 
proof of Theorem \ref{T_UB}
is to formally introduce the 
notion of an extension.

\begin{definition}
Let $S\subset F$ be graphs and suppose that 
$V(F)$ is ordered. 
Let $S'\subset F'\subseteq G$ be some other graphs, where $V(G)$ is ordered.  
Then we call $F'$ an {\it $(S,F)$-extension}
of $S'$ in $G$ if  there is an isomorphism of the graphs $F\setminus E(S)$
and $F'\setminus E(S')$ such that 
the $i$th largest vertex in $S$ is mapped to the $i$th 
largest vertex in $S'$, for each $1\le i\le v(S)$.
\end{definition}

In other words, $F'$ extends $S'$ in the same way as
$F$ extends $S$. 
Note that, in the definition above,  $F'$ is not necessarily an induced subgraph of $G$, 
and that the edges in $F[S]$ and $G[S']$ play 
no role.

\begin{definition}
For graphs $S\subset F$ and some $\alpha>0$, we say that $(S,F)$ 
is {\it $\alpha$-dense} if 
\[
e(F)-e(F')
\ge \alpha(v(F)-v(F')), 
\]
for {\it every} induced subgraph $S\subseteq F'\subset F$. 
\end{definition}

In other words, for any partial revealment  $F'\supseteq S$ of $F$, 
we will need to add at least $\alpha$ edges per vertex
in order to reveal the rest of $F$. In light of this, 
let us make the following observation. 

\begin{lemma}\label{L_mergeFs}
Let $e\subset F_1,F_2$ and suppose that $(e,F_1)$
and $(e,F_2)$ are $\alpha$-dense. 
Then $(e,F_1\cup F_2)$ is $\alpha$-dense. 
\end{lemma}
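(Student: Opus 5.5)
Let $F=F_1\cup F_2$ and fix an induced subgraph $e\subseteq F'\subset F$. The goal is $e(F)-e(F')\ge\alpha(v(F)-v(F'))$. The plan is to count what $F$ adds beyond $F'$ separately inside $F_1$ and inside $F_2$. Each piece will be controlled by the density of the corresponding pair, applied to a suitable induced subgraph.

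First handle $F_1$. Set $F_1'=F_1[V(F')\cap V(F_1)]$. Since $e\subseteq F'$ and $e\subseteq F_1$, it contains $e$, and it is an induced subgraph of $F_1$. If $F_1'\neq F_1$, $\alpha$-density of $(e,F_1)$ gives
\[
e(F_1)-e(F_1')\ge\alpha\bigl(v(F_1)-v(F_1')\bigr).
\]
If $F_1'=F_1$, this inequality holds trivially. So in all cases the vertices of $F_1$ outside $F'$, together with the edges of $F_1$ not inside $F_1'$, carry at least $\alpha$ edges per vertex.

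Next handle $F_2$ relative to $F'\cup F_1$, which is the main step. Let $U=V(F')\cup V(F_1)$ and set $F_2'=F_2[V(F_2)\cap U]$. This again contains $e$ and is induced in $F_2$, so
\[
e(F_2)-e(F_2')\ge\alpha\bigl(v(F_2)-v(F_2')\bigr).
\]
The number $v(F_2)-v(F_2')$ is exactly the number of vertices of $F$ lying outside $U$. Together with $v(F_1)-v(F_1')$, it accounts for all of $v(F)-v(F')$.

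It remains to check that the two edge sets being counted are disjoint and consist of edges of $F$ not in $F'$. The first set, $E(F_1)\setminus E(F_1')$, consists of edges of $F_1$ with an endpoint in $V(F_1)\setminus V(F')$. None of these is in $F'$: an edge of $F'$ has both ends in $V(F')$, and since $F'$ is induced in $F$, an edge of $F_1$ with both ends in $V(F')$ lies in $F_1'$. The second set, $E(F_2)\setminus E(F_2')$, consists of edges with an endpoint outside $U$. These lie neither in $F_1$ nor in $F'$. Hence both sets lie in $E(F)\setminus E(F')$ and are disjoint, and summing the two inequalities gives $e(F)-e(F')\ge\alpha(v(F)-v(F'))$.

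The only subtlety I expect is this bookkeeping. One must use that $F'$ is induced, so that edges of $F_1$ among vertices of $F'$ are already in $F'$ and are not double-counted. Note also that $F_2$'s edges inside $U$ may be missing from $F'\cup F_1$; these are simply discarded, which only helps the inequality.
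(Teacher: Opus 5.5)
Your proof is correct and is essentially the paper's argument: you split $V(F)\setminus V(F')$ into the $x$ vertices of $F_1$ outside $F'$ and the $y$ vertices of $F_2$ outside $F'\cup F_1$. You then apply the $\alpha$-density of $(e,F_1)$ and $(e,F_2)$ to the induced subgraphs on $V(F')\cap V(F_1)$ and $V(F_2)\cap(V(F')\cup V(F_1))$, which gives two disjoint sets of at least $\alpha x$ and $\alpha y$ edges of $F$ outside $F'$.

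One small remark: the inducedness you actually use is that of $F_1'$ and $F_2'$, not of $F'$, since an edge with an endpoint outside $V(F')$ can never lie in $F'$. So your closing comment that one must use that $F'$ is induced is unnecessary, though harmless.
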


\begin{proof}
Indeed, let $F'$ be an induced subgraph of 
$F_1\cup F_2$ containing $e$. 
Let $x$ be the number of vertices in $F_1$ that are not in
$F'$, and let $y$ be the number of vertices in $F_2$ that are not in 
$F'\cup F_1$. Then 
there are at least $\alpha x$ many edges in $F_1$ with at most
one endpoint in $F'$, and at least $\alpha y$ many edges in $F_2$
with at most one endpoint in $F'\cup F_1$. Hence 
\[
e(F_1\cup F_2)-e(F')
\ge
\alpha (x+y)
=\alpha (v(F_1\cup F_2)-v(F')), 
\]
as required; see Figure \ref{F_dense}. 
\end{proof}

\begin{figure}[h]
\centering
\includegraphics[scale=0.85]{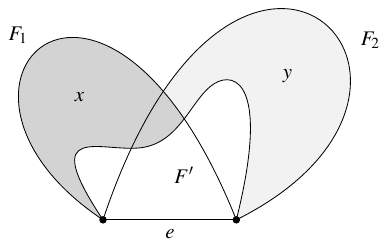}
\caption{Illustration of the proof of Lemma \ref{L_mergeFs}. 
If there are $x$ vertices (dark grey) in $F_1$ that are not in $F'$ 
and $y$ vertices (light grey) in $F_2$ that are not in $F'\cup F_1$, then there
are at least $\alpha x$ edges in $F_1$ with at most one endpoint in $F'$, 
and at least $\alpha y$ edges in $F_2$ with at most one endpoint in 
$F'\cup F_1$. 
}
\label{F_dense}
\end{figure}

\begin{definition}
\label{def:C-core}
Let $(e,A)\in{\mathcal A}(H)$. 
The {\it $\alpha$-core} $C_e\subseteq A$ of $(e,A)$
is defined as follows. 
If there is no subgraph $e\subset F\subseteq A\cup e$ such that 
$(e,F)$ is $\alpha$-dense,  
let $C_e$ be the empty graph on the vertices of $e$. 
Otherwise, let $C_e=F_e\setminus e$
where $F_e$ is the maximum such $F$. 
\end{definition}

Note that, 
by Lemma \ref{L_mergeFs}, $C_e$ is well defined.

\begin{definition}
Let $S\subset F$. Naturally, we say that  $(S,F)$ is 
{\it $\alpha$-sparse} if $\rho^{\rm max}(S,F)<\alpha$; that is, 
if 
\[
e(F')-e(S)<\alpha(v(F')-v(S)), 
\]
for {\it every} $S\subset F'\subseteq F$. 
\end{definition}

\begin{lemma}\label{L_sparse}
Let $(e,A)\in{\mathcal A}(H)$. Suppose that 
$C_e\subset A$ is the $\alpha$-core of $(e,A)$. 
Then $(C_e,A)$ is $\alpha$-sparse. 
\end{lemma}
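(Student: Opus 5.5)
We argue by contradiction, using the maximality of the $\alpha$-core. Write $F_e = C_e\cup e$; when no $\alpha$-dense $(e,F)$ exists, $F_e=e$. Suppose $(C_e,A)$ is not $\alpha$-sparse. Then there is some $C_e\subset B\subseteq A$ with
\[
e(B)-e(C_e)\ge \alpha\,(v(B)-v(C_e)).
\]
Adding the edge $e$ to both sides does not change the difference, so we may work with $F_e\subset B\cup e$.

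The plan is to choose $B$ so that the extension $(F_e, B\cup e)$ is itself $\alpha$-dense, and then glue it onto $F_e$.

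\textbf{Step 1: choosing $B$.} Among all $F_e\subsetneq B'\subseteq A\cup e$ with $e(B')-e(F_e)\ge\alpha(v(B')-v(F_e))$, take $B'$ with the maximum number of vertices. Then delete edges outside $F_e$ while the inequality survives. This gives a graph we still call $B'$.

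We must have $v(B')>v(F_e)$. Indeed, if $V(B')=V(F_e)$ then $B'$ only adds edges inside $V(F_e)$. In that case $e(B')\ge e(F_e)+1$, and $(e,B')$ is again $\alpha$-dense: the gap for every induced $F'\supseteq e$ only grows, and the comparison uses the same vertex counts. This contradicts the maximality of $F_e$ as a graph. So we may assume $B'$ has new vertices.

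\textbf{Step 2: showing $(F_e,B')$ is $\alpha$-dense.} Let $F_e\subseteq F'\subset B'$ be induced. If $e(B')-e(F')<\alpha(v(B')-v(F'))$, subtracting from the defining inequality of $B'$ gives
\[
e(F')-e(F_e)>\alpha\,(v(F')-v(F_e)).
\]
If $v(F')>v(F_e)$, this is a valid candidate with strict inequality. I expect this to lead to a contradiction via a minimality choice rather than the maximality used in Step 1. So the cleaner choice is to take $B'$ \emph{minimal} in vertex count, and within that, to take care with edges. The bad $F'$ then has fewer vertices and still satisfies the inequality, contradicting minimality. If instead $V(F')=V(F_e)$, then $F'\supseteq F_e$ is induced, which is handled like the case in Step 1.

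\textbf{Step 3: concluding $(e,B')$ is $\alpha$-dense.} Given $(e,F_e)$ and $(F_e,B')$ both $\alpha$-dense, we show $(e,B')$ is $\alpha$-dense. Take an induced $e\subseteq F''\subset B'$. Count the vertices of $F_e$ missing from $F''$ and the vertices of $B'\setminus F_e$ missing from $F''$, exactly as in the proof of Lemma~\ref{L_mergeFs}. For the second part, apply the density of $(F_e,B')$ to the induced subgraph on $V(F''\cup F_e)$. This is essentially Lemma~\ref{L_rho_comb}(1) in its "dense" form. The result contradicts the maximality of $F_e$, since $B'\supsetneq F_e$.

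\textbf{Main obstacle.} The delicate step is Step 2. The selection of $B'$, minimal versus maximal in vertices and edges, has to be made so that every partial revealment $F'$ of $B'$ over $F_e$ still needs at least $\alpha$ edges per new vertex. The subtle case is $F'$ with the same vertex set as $F_e$ but with extra edges. There I would use that $F_e$ is induced in $A\cup e$: any extra edge inside $V(F_e)$ could be added to $F_e$ while keeping it $\alpha$-dense, contradicting maximality. This makes $F_e$ an induced subgraph, so that case disappears.
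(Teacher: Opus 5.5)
Once you commit to the vertex-minimal choice of $B'$ (the maximal choice in Step~1 should be discarded), your argument is the paper's proof: minimality makes every intermediate revealment sparse over the core, so the extension over $F_e$ is $\alpha$-dense, and the $x$/$y$ vertex count from Lemma~\ref{L_mergeFs} glues it onto $F_e$, contradicting the maximality of the core. The paper does these two steps in a single count, and your explicit remark that $F_e$ is induced in $A\cup e$ covers the same-vertex-set case, which the paper's count absorbs without comment.
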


\begin{proof}
For simplicity, put $C=C_e$. 
Indeed, towards a contradiction, let $F$ be a minimal 
$C\subset F\subseteq A$ for which 
\begin{equation}\label{E_Fdense}
e(F)-e(C)\ge \alpha(v(F)-v(C)).
\end{equation}
Then we claim that $(e,F)$ is $\alpha$-dense, 
in contradiction to the fact that $C$ is the
$\alpha$-core (the maximum such $F$). To see this, 
first note that, 
by the minimality of $F$, for any $C\subset F''\subset F$
we have that 
\begin{equation}\label{E_F''sparse}
e(F'')-e(C)<\alpha(v(F'')-v(C)).
\end{equation}
Next, consider some $e\subset F'\subset F\cup e$.
Suppose there are $x$ many vertices in $C$
that are not in $F'$, 
and $y$ many vertices in $F$ that are not in $F'\cup C$. 
Since $C$ is the $\alpha$-core, 
there are at least $\alpha x$ many edges
in $C$ with at most one endpoint in $F'$. 
By \eqref{E_Fdense} and \eqref{E_F''sparse}, 
there are at least $\alpha y$ many edges in $F$
with at most one endpoint in $F'\cup C$. 
Hence 
\[
e(F)-e(F')\ge\alpha(x+y)=\alpha(v(F)-v(F')), 
\] 
and so $(e,F)$ is $\alpha$-dense, 
yielding the desired contradiction;
see Figure \ref{F_sparse}. 
\end{proof}

\begin{figure}[h]
\centering
\includegraphics[scale=0.85]{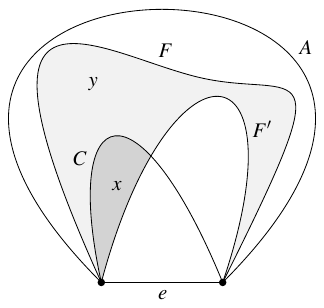}
\caption{Illustration of the proof of Lemma \ref{L_sparse}. 
If there are $x$ vertices (dark grey) in $C$ that are not in $F'$ 
and $y$ vertices (light grey) in $F$ that are not in $F'\cup C$, then there
are at least $\alpha x$ edges in $C$ with at most one endpoint in $F'$, 
and at least $\alpha y$ edges in $F$ with at most one endpoint in 
$F'\cup C$. 
}
\label{F_sparse}
\end{figure}

\subsection{Red and black edges}

As discussed in Section \ref{S_WGs}, 
the WGA applied to $A^*$ begins by setting $W_e^*=e$ 
for each 
initially active (at time $t=0$) edge 
$e$ in $A^*$. 
Then, for each edge $e$ activated at some time $t\ge1$
it selects (arbitrarily if not unique)
some copy $H_e^*$ completed by $e$ at time $t$
and sets 
\begin{equation}\label{E_We}
W_e^*=\bigcup_{f\in E(H_e^*\setminus e)}W_f^*.
\end{equation}
Since all $f$ in $H_e^*\setminus e$ have been activated by 
time $t-1$, all $W_f^*$ have already been defined, 
and so $W_e^*$ is well defined. 
Note that the witness graph $W^*\subseteq A^*$ 
discussed above
is $W^*=W^*_{e^*}$. 

The following definition 
is similar to the {\it red edge algorithm (REA)} in 
\cite{BBM12}, where we run the WGA but suppress
all witness graphs that do not contribute to the formation 
of the witness graph $W^*$ of interest. 

\begin{definition}
For each $e$ in $A^*$, put ${\mathcal R}_e^*=\emptyset$
and recall that $W_e^*=e$.  
For edges $e$ activated at time $t\ge1$, 
we put 
\[
{\mathcal R}_e^*=\{e\}\cup\bigcup_{f\in E(H_e^*\setminus e)}{\mathcal R}_f^*. 
\]
We call the edges in ${\mathcal R}_e^*$ the {\it red edges}
of $W_e^*$. 
The edges in $W_e^*$ itself are its {\it black edges}. 
Note that if the black edges of $W_e^*$ are
initially active then the $H$-dynamics will 
activate each red edge of $W_e^*$, towards
eventually activating $e$ itself. We let ${\mathcal R}^*={\mathcal R}^*_{e^*}$
denote the set of red edges of $W^*$. 
\end{definition}

\subsection{Small cores}
Throughout the remainder of this section, 
we let 
\[
\alpha_*=\rho+(\rho+2)/\log n.
\] 
Our next key observation is that, 
although we have no control over the  
size of $W^*$ itself, it is however the case that 
$W^*_f$ has a small $\alpha_*$-core 
for each red edge $f$ of $W^*$. 
As we will see, this will allow us to embed 
a ``folded'' version of $W^*$, one small extension at a time, 
based at each missing edge of $\Gnp$. 

\begin{definition}
For each red edge
$f\in{\mathcal R}^*$ of $W^*$,
we let $C_f^*$ denote the 
$\alpha_*$-core of $(f,W^*_f)$. 
If $f\in E(W^*)$ is a black edge of $W^*$, we
put $C_f^*=f$.
\end{definition}

\begin{lemma}\label{L_smallC}
For all large $n$, 
we have $v(C_f^*)\le 2\log n$, 
for all $f\in{\mathcal R}^*$. 
\end{lemma}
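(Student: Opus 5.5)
The plan is to play the density of the core $C_f^*$ off against the near-optimality \eqref{E_alphastar} of $(e^*,A^*)$. The core is $\alpha_*$-dense relative to $f$, and $\alpha_*$ exceeds $\rho+1/\log n$ by roughly $(\rho+1)/\log n$. Any large subgraph of $W^*$ that is that dense, placed alongside $e^*$, would violate the bound $\rho^{\rm max}(e^*,A^*\cup e^*)<\rho+1/\log n$. The only slack is the two vertices of $e^*$, which cost a term of order $1/v(C_f^*)$. This forces $v(C_f^*)=O(\log n)$, and a careful count gives the constant $2$.

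\emph{Step 1 (trivial case and containment).} If $C_f^*$ is the empty graph on the vertices of $f$, then $v(C_f^*)=2$ and there is nothing to prove. Otherwise $F_f=C_f^*\cup f$ is the maximum $\alpha_*$-dense extension of $f$. Since $f$ is red, $W_f^*\subseteq W^*\subseteq A^*$ by \eqref{E_We} and induction, so $C_f^*\subseteq A^*$. Note also that $e^*\notin A^*$, since $e^*$ lies in $\langle A^*\rangle_H\setminus A^*$.

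\emph{Step 2 (density of the core).} Apply the definition of $\alpha_*$-density with $F'=f$ (an induced subgraph of $F_f$ containing $f$). This gives
\[
e(C_f^*) = e(F_f)-1 \ge \alpha_*\,(v(F_f)-2).
\]
Writing $v=v(C_f^*)=v(F_f)$, we get $e(C_f^*)\ge\alpha_*(v-2)$.

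\emph{Step 3 (comparison with $e^*$).} Let $F=C_f^*\cup e^*$. Then $e^*\subset F\subseteq A^*\cup e^*$, and $e(F)-1=e(C_f^*)$ because $e^*\notin C_f^*$. Also $2<v(F)\le v+2$, so $v(F)-2\le v$. By \eqref{E_alphastar},
\[
\rho+\frac1{\log n} > \frac{e(F)-1}{v(F)-2} \ge \frac{\alpha_*(v-2)}{v} = \alpha_*-\frac{2\alpha_*}{v}.
\]
Substituting $\alpha_*=\rho+(\rho+2)/\log n$ yields $(\rho+1)/\log n < 2\alpha_*/v$, that is,
\[
v < \frac{2\alpha_*}{\rho+1}\log n = \frac{2\bigl(\rho+(\rho+2)/\log n\bigr)}{\rho+1}\log n .
\]
For large $n$ we have $(\rho+2)/\log n<1$, so the prefactor is less than $2$ and hence $v<2\log n$.

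I expect the main subtlety to be Step 3: choosing the right comparison graph and bookkeeping the vertices of $e^*$ versus those of $f$. One must check that adjoining $e^*$ adds at most two vertices and exactly one edge, and that $F$ is a legitimate competitor in the maximum defining $\rho^{\rm max}(e^*,A^*\cup e^*)$. This uses that red edges have witness graphs inside $W^*\subseteq A^*$. Once this is set up, the rest is the arithmetic above.
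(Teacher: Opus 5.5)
Your proof is correct and follows essentially the same route as the paper: both use that $C_f^*\cup f$ has ratio $(e(F)-1)/(v(F)-2)\ge\alpha_*$, swap $f$ for $e^*$ at a cost of at most two extra vertices and no change in edge count, and compare with \eqref{E_alphastar}. Your bookkeeping is slightly more explicit than the paper's. You derive the density bound from $\alpha_*$-density with $F'=f$ and check that $e^*\notin A^*$. You also reach $v<2\alpha_*\log n/(\rho+1)$ directly, where the paper gets $\frac{2\rho}{\rho+1}\log n+O(1)$.
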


\begin{proof}
Suppose that, for some $f\in{\mathcal R}^*$, 
there is some subgraph
$f\subset F\subset W^*_f\cup f$ 
for which 
\[
\frac{e(F)-1}{v(F)-2}\ge \rho+\frac{\rho+2}{\log n}.
\]
Recall that $W_f^*\subset W^*$. 
Therefore, $e^*\subset F'\subset W^*\cup e^*$, 
where $F'=(F\setminus f)\cup e^*$. 
Also recall that $W^*\subseteq A^*$. 
Hence, by \eqref{E_alphastar}, and 
since $e(F')=e(F)$ and $v(F')\le v(F)+2$, 
it follows that 
\[
\frac{e(F)-1}{v(F)}\le \frac{e(F')-1}{v(F')-2}\le \rho+\frac{1}{\log n}.
\] 
Altogether, we find that 
\[
\frac{e(F)-1}{v(F)}-\frac{1}{\log n}
\le \rho
\le 
\frac{e(F)-1}{v(F)-2}-\frac{\rho+2}{\log n},
\]
and so 
\[
\frac{\rho+1}{\log n}
\le
\frac{2}{v(F)}\cdot\frac{e(F)-1}{v(F)-2}
\le \frac{2}{v(F)-2}
\left(\rho+\frac{1}{\log n}\right).
\]
Hence
\[
v(F)\le \frac{2\rho}{\rho+1}\log n+O(1)\le 2\log n,
\]
for all large $n$, and the result follows. 
\end{proof}

\subsection{Modified cores}
As discussed, we aim to embed a 
``folded'' version of $W^*$ based at each 
missing edge $e$ in $\Gnp$, and we plan to do so
one small extension at a time. In order to allow 
for such a recursive construction, it will 
be useful to consider the following
modified version of the $\alpha$-core. 

\begin{definition}
For each red edge $f\in {\mathcal R}^*$ in $W^*$, 
we let  
\begin{equation}\label{E_Ustar}
M_f^*:=\bigcup_{g\in E(H_f^*\setminus f)} C^*_g 
\end{equation}
denote the {\it modified $\alpha_*$-core} of $f$.
In particular, we let $M^*=M^*_{e^*}$ denote the 
{\it modified $\alpha_*$-core} of $e^*$.
\end{definition}

Note that, by Lemma \ref{L_smallC}, for all large $n$, 
\begin{equation}\label{E_hatC2}
v(M_f^*)\le \sum_{g\in E(H_f^*\setminus f)}v(C^*_g)
\le 2e(H)\log n,
\end{equation}
for all $f\in{\mathcal R}^*$.

\subsection{Extensions in $\Gnp$}
The following technical result will be used in 
our proof below of Theorem \ref{T_UB}.
Informally, it says that, 
with high probability, 
for {\it all} sufficiently small and sparse $(S,F)$-extensions, 
{\it all} subgraphs $S'\subset\Gnp$ have an $(S,F)$-extension
in $\Gnp$. This fact will allow us to recursively 
build a ``folded'' version of $W^*$ starting from each missing
edge in $\Gnp$. 

\begin{lemma}\label{L_Janson}
Let $\rho$ and $np^\rho=A\log^{2+2/b} n$ be as in Theorem \ref{T_UB}. 
Then, for large enough $A$, with high probability, for every $(S,F)$ 
such that 
\begin{itemize}[nosep]
\item $V(F)$ is ordered, 
\item $v(F)\le 2e(H)\log n$, and 
\item $(S,F)$ is $\alpha_*$-sparse, 
\end{itemize}
every subgraph $S'\subset\Gnp$
with $v(S')=v(S)$ has an $(S,F)$-extension  
in $\Gnp$ (where  $V(S')$
is ordered by the natural order of the integers). 
\end{lemma}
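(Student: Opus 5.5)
The plan is to prove the statement for one fixed pair $(S,F)$ and one fixed $S'$ using Janson's inequality, and then take a union bound over all choices. The union bound needs the following count. The number of ordered $(S,F)$ with $v(F)\le k:=2e(H)\log n$ is at most $2^{O(\log^2 n)}$. The number of $S'\subset\Gnp$ with $v(S')=v(S)$ is at most $n^{v(S)}\le e^{O(\log^2 n)}$. It therefore suffices to show that, for fixed $(S,F)$ and $S'$,
\[
\P(\text{$S'$ has no $(S,F)$-extension})\le \exp\bigl(-\omega_A(\log^2 n)\bigr),
\]
meaning the exponent is at least $c(A)\log^2 n$ with $c(A)\to\infty$ as $A\to\infty$.

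\textbf{Setup.} Since edges inside $S$ play no role, we may delete them from $F$. Let $v=v(F)-v(S)$ and $e=e(F)-e(S)$, and let $X$ count the $(S,F)$-extensions of $S'$ in $\Gnp$. Then $\mu=\E X=(1+o(1))\,n^{v}p^{e}$, because $k\ll \sqrt n$ makes the falling factorial $(n)_v$ equal to $(1+o(1))n^v$. For every $S\subset F'\subseteq F$, write $v'=v(F')-v(S)$ and $e'=e(F')-e(S)$; $\alpha_*$-sparseness says $e'<\alpha_*v'$. Since $p=n^{-1/\rho+o(1)}$, the correction $p^{(\rho+2)/\log n}$ is bounded below by a constant $c>0$. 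Hence $np^{\alpha_*}\ge cA\log^{2+2/b}n$, and so
\[
n^{v'}p^{e'}\ge (cA\log^{2+2/b} n)^{v'} .
\]

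\textbf{Janson's inequality.} We use $\P(X=0)\le\exp(-\mu^2/(2\bar\Delta))$, where $\bar\Delta$ sums $\P(\text{both copies present})$ over ordered pairs of copies sharing at least one edge (including equal pairs). Group the pairs by the common part, which is a sub-extension $F'$ with $v'\ge1$. Counting the ways to place the shared vertices in each copy gives at most $k^{2v'}$ patterns of a given size, and therefore
\[
\frac{\bar\Delta}{\mu^2}\le \sum_{F'} \frac{k^{2v'}}{n^{v'}p^{e'}} .
\]
The main obstacle is showing that every term here is at most $\bigl(\omega_A(\log^2 n)\bigr)^{-1}$, while the number of terms grows only polylogarithmically once terms are grouped by $v'$. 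The crude bound gives only $(cA\log^{2/b}n)^{-v'}$, which is enough only when $v'\ge b$. I will split into cases using the integrality of $e'$.

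\textbf{Case analysis.} Suppose first that $v'<\log n/(b(\rho+2))$. Then $\alpha_*v'-\rho v'<1/b$, and since $e'$ is an integer and $\rho=a/b$, we get $e'\le\rho v'$. There are two subcases.
\begin{itemize}[nosep]
\item If $e'<\rho v'$, then $e'\le \rho v'-1/b$. This gives an extra factor $p^{-1/b}=n^{1/(\rho b)+o(1)}$, and the term is polynomially small.
\item If $e'=\rho v'$, then $b\mid v'$, so $v'\ge b$. The term is at most $(cA\log^{2/b}n)^{-v'}\le (cA)^{-b}\log^{-2}n$.
\end{itemize}
If instead $v'\ge \log n/(b(\rho+2))$, the term is at most $(cA\log^{2/b}n)^{-v'}=\exp(-\Omega(\log n\cdot\log\log n))$. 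If $\rho$ is irrational, I take $b$ large and replace $a/b$ by a rational approximation, so the same argument applies with arbitrary $b$.

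\textbf{Conclusion.} Summing over $v'\le k$ and over the at most $2^{v'}\cdot(\text{edge choices})$ shapes, after absorbing these counts into the per-vertex factors, gives $\mu^2/\bar\Delta\ge c'A^{b}\log^2 n$. Moreover $\mu\ge\mu^2/\bar\Delta$, since $\bar\Delta\ge\mu$ from the diagonal pairs. Janson then yields a failure probability of at most $\exp(-c'A^b\log^2 n/2)$. For $A$ large this beats the $e^{O(\log^2 n)}$ union bound, which completes the proof.
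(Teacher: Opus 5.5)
Your argument is, in substance, the paper's. Both use Janson's inequality for a fixed $(S,F)$ and base set, and a union bound over $e^{O(\log^2 n)}$ configurations. Overlaps are indexed by the number $v'$ of shared extension vertices, and shared edges are bounded through $\alpha_*$-sparseness. Integrality of edge counts with $\rho=a/b$ then means small overlaps with $b\nmid v'$ gain a polynomial factor, while the $v'=b$ overlaps contribute $O(A^{-b}\log^{-2}n)$. The differences are cosmetic. The paper counts \emph{ordered} extensions, which are unique for each vertex set, so its overlap ratio is $\binom{t}{x}\binom{n}{t-x}/\binom{n-s}{t}$, and it cuts at the fixed value $x\le 2b$. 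You count labelled embeddings and cut at $v'<\log n/(b(\rho+2))$.

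Two points need repair.

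\emph{The irrational case.} Your integrality step, $e'<\rho v'+1/b\Rightarrow e'\le\rho v'$, uses $\rho v'\in\frac1b\mathbb{Z}$. Replacing $\rho$ by a rational approximation does not restore this, because the sparseness bound $e'<\alpha_*v'$ and the scale $np^\rho$ are tied to $\rho$ itself. The fix is to cut at the fixed value $b$. Put $\eta_b=\min_{1\le v'<b}\operatorname{dist}(\rho v',\mathbb{Z})>0$. For $1\le v'<b$ and large $n$, you get $e'\le\lfloor\rho v'\rfloor\le\rho v'-\eta_b$, so each such term gains a factor $p^{\eta_b}$. For $v'\ge b$, the crude bound $(O(1)/(A\log^{2/b}n))^{v'}$ sums to $O(A^{-b}\log^{-2}n)$. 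This also works for rational $\rho$, with $\eta_b\ge 1/b$, and makes your growing cutoff unnecessary.

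\emph{The count of overlap patterns.} The factor $k^{2v'}$ must be the \emph{total} number of patterns of size $v'$. It already includes the $\binom{v}{v'}$ choices of the shared set in the first copy (that is, the choice of $F'$) and the $(v)_{v'}$ placements in the second copy. So the usable bound is $\bar\Delta/\mu^2\le(1+o(1))\sum_{v'}k^{2v'}\max_{F'}(n^{v'}p^{e'})^{-1}$. Your displayed sum over $F'$ with weight $k^{2v'}$ overcounts, and so does the extra count of ``$2^{v'}\cdot$(edge choices) shapes'' in your conclusion. Here this is fatal, not harmless. The per-vertex budget $np^\rho/k^2=\Theta(A\log^{2/b}n)$ is exactly tight at $v'=b$. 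An extra factor $\binom{v}{b}=\Theta(\log^b n)$ would only give $O(A^{-b}\log^{b-2}n)$, which does not beat the $\log^2 n$ needed for the union bound.

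With the count done correctly, and no sum over edge sets (since $F'$ is determined by its vertex set), your bound $\mu^2/\bar\Delta\ge c'A^b\log^2 n$ holds and the rest of your argument goes through.
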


\begin{proof}
For technical convenience, we will restrict our attention to 
{\it ordered} $(S,F)$-extensions, such that,   
\begin{itemize}[nosep]
\item as above, the $i$th largest vertex in $S$ is mapped to the $i$th 
largest vertex in $S'$, for each $1\le i\le v(S)$, but that also 
\item the $j$th largest vertex in $V(F)\setminus V(S)$ is mapped
to the $j$th largest vertex in $V(F')\setminus V(S')$, for each $1\le j\le v(F)-v(S)$. 
\end{itemize}
As such, we essentially ignore the symmetries of $F$ outside of $S$, 
and consider one particular way of extending $S$. This simplifies
various calculations in our analysis. 

Fix some $(S,F)$, as above, and some 
$V\subset[n]$ of size $|V|=v(S)$. 
Let $S'$ denote the subgraph of $\Gnp$ induced by $V$, 
and let $X$ denote the number of 
ordered $(S,F)$-extensions of $S'$ in $\Gnp$. 

For ease of notation, put $B=2e(H)$ and note that 
$v(F)\le B\log n$. Also note that, since $(S,F)$ is $\alpha_*$-sparse, 
it follows that 
\[
\rho^{\rm max}(S,F)\le \rho+C\rho/\log n, 
\]
where $C=1+2/\rho$.

Applying Janson's inequality, we will show that, 
for all large $n$, 
\begin{equation}\label{E_Janson}
-\log \P(X=0)>B^2\log^2 n.
\end{equation}
Taking a union bound, this will prove the lemma, since 
the number of relevant $(S,F)$ and $V$ is at most
\[
2^{B\log n+(B\log n)^2/2}n^{B\log n}
\ll e^{B^2\log^2 n}. 
\]
Recall (see, e.g., \cite{AS16}) that Janson's inequality implies that 
\[
-\log \P(X=0)\ge \frac{\mu^2}{\mu+\Delta}
=\frac{1}{1/\mu+\Delta/\mu^2},
\]
where $\mu=\E X$ is the expected
number of ordered $(S,F)$-extensions of $S'$ in $\Gnp$, 
and $\Delta$ is the expected number of pairs of 
distinct ordered $(S,F)$-extensions of $S'$ in $\Gnp$ that share at least one edge
outside of $S'$. 
To prove \eqref{E_Janson}, we will show that 
$\mu\gg \log^2n$ and $\mu^2/\Delta> 2B^2\log^2n$. 

Let $s=v(S)$ and $t=v(F)-v(S)$. 

First, to see that 
$\mu\gg \log^2 n$, we note 
(using ${n\choose k}\ge(n/k)^k$) that 
\[
\mu= {n-s\choose t}p^{e(F)-e(S)}
\ge (1-o(1)) \left(\frac{np^{\rho^{\rm max}(S,F)}}{t}\right)^{t},
\]
since there are at most $t\rho^{\rm max}(S,F)$ many edges
in $F$ that are not in $S$. 
Next, we observe that 
\begin{align*}
np^{\rho^{\rm max}(S,F)}
&\ge
\left(\frac{A\log^{2+2/b} n}{n}\right)^{C/\log n}
A\log^{2+2/b} n\\
&=(1+o(1))e^{-C}A\log^{2+2/b} n.
\end{align*}
Since $(np^{\rho^{\rm max}(S,F)}/t)^t$ increases in $t$, 
we find that $\mu\gg\log^2 n$, as claimed. 

Finally, we turn to $\mu^2/\Delta$. 
Let $(F_i,i\ge1)$ be an enumeration of 
all possible ordered $(S,F)$-extensions 
of $S'$ in $\Gnp$. We let $T_i\subset F_i$ denote the subgraph induced
by the edges of $F$ with at least one endpoint outside of $V$. 
Let us write $i\sim j$ if $T_i,T_j$
share at least one edge. 
Then 
\[
\Delta=\sum_{i\sim j}\P(T_i,T_j\subset\Gnp)
=\mu\sum_{j\sim 1}\P(T_j\subset\Gnp|T_1\subset\Gnp). 
\]
Therefore, to show that 
$\mu^2/\Delta>2B^2\log^2n$, 
it suffices to 
verify  that 
\begin{equation}\label{E_corr}
\frac{1}{\mu}\sum_{j\sim 1}\P(T_j\subset\Gnp|T_1\subset\Gnp)
<\frac{1}{2B^2 \log^2n}.
\end{equation}
By considering $F_j$ with $x$ vertices in common with 
$F_1$, we claim that 
\begin{equation}\label{E_corrsumx}
\frac{1}{\mu}\sum_{j\sim 1}\P(T_j\subset\Gnp|T_1\subset\Gnp)
\le 
\sum_x\frac{{t\choose x}{n\choose t-x}}{{n-s\choose t}}
p^{-\lfloor x\rho+xC\rho/\log n\rfloor}.
\end{equation}
Indeed, this follows since, if $F_j,F_1$ have $x$ vertices in common, then they can 
share at most $\lfloor x\rho^{\max}(S,F)\rfloor$ many edges. 

Next, since the relevant $s$ and $t$ are of order $\log n$, 
we note 
(using $(n-k)^k/k!\le {n\choose k}\le n^k/k!$) that
\[
\frac{{t\choose x}{n\choose t-x}}{{n-s\choose t}}
\le \frac{n^t}{(n-s-t)^t} (t/n)^x{t\choose x}
\le (1+o(1)) \left(\frac{et^2}{xn}\right)^x.
\]
In particular, if $x\ge 2b+1$, then
\begin{align*}
\frac{{t\choose x}{n\choose t-x}}{{n-s\choose t}}
p^{-\lfloor x\rho+xC\rho/\log n\rfloor}
&\le \left(\frac{O(t^2)}{np^{\rho}}\right)^x
\le \left(\frac{O(1)}{\log^{2/b}n}\right)^x\\
&=O(\log^{-4}n)\left(\frac{O(1)}{\log^{2/b}n}\right)^{x-2b}.
\end{align*}
On the other hand, if $x\le 2b$ and $\rho x\notin\Z$, then
\[
 \frac{{t\choose x}{n\choose t-x}}{{n-s\choose t}}
p^{-\lfloor x\rho+xC\rho/\log n\rfloor}
\le \left(\frac{O(1)}{\log^{2/b}n}\right)^x n^{-\Theta(x)}
=n^{-\Theta(x)}.
\]
Also note that 
\begin{align*}
\sum_{x\in\{b,2b\}} \frac{{t\choose x}{n\choose t-x}}{{n-s\choose t}}
p^{-\lfloor x\rho+xC\rho/\log n\rfloor}
&\le 
(1+o(1))\sum_{x\in\{b,2b\}} 
\left(\frac{e^{C+1}B^2}{A\log^{2/b} n}
\right)^x\\
&=(1+o(1))\frac{e^{C+1}B^2}{A\log^2 n}
<\frac{1}{2B^2\log^2n},
\end{align*}
for all large $n$, provided that 
$A>2e^{C+1}B^4$.

Altogether, for any such $A$, summing over all relevant $x$
in \eqref{E_corrsumx}, 
we find that 
\eqref{E_corr} holds for all large $n$, 
and this completes the proof. 
\end{proof}

\subsection{The construction}
We are ready to prove Theorem \ref{T_UB}.

\begin{proof}[Proof of Theorem \ref{T_UB}]
Let $np^\rho=A\log^{2+2/b} n$, where $A$ is sufficiently large. 
Then, by Lemma \ref{L_Janson}, with high probability, 
for {\it every} $S\subset F$ such that 
\begin{itemize}[nosep]
\item $V(F)$ is ordered, 
\item $v(F)\le 2e(H)\log n$, and 
\item $(S,F)$ is $\alpha_*$-sparse, 
\end{itemize}
{\it every} subgraph $S'\subset\Gnp$
with $v(S')=v(S)$ has an $(S,F)$-extension  
in $\Gnp$.
By Lemma \ref{L_smallC} and \eqref{E_hatC2}, 
for all large $n$, we have 
$v(M_f^*)\le 2e(H)\log n$ for all red edges $f\in {\mathcal R}^*$
of $W^*$. 
Also, by Lemma \ref{L_sparse}, 
note that $(C_f^*,M_f^*)$ is $\alpha_*$-sparse for
all $f\in {\mathcal R}^*$. 
For the rest of this proof, we will assume that these statements  
hold. Using these, we will show how to activate an arbitrary edge $g$ 
missing from $\Gnp$. 

{\it Base step.} Consider the modified $\alpha_*$-core $M^*=M^*_{e^*}$
of $W^* =W^*_{e^*}$. Since $M^*\subseteq W^*\subseteq A^*$, it follows by 
\eqref{E_alphastar} that 
\[
\rho^{\rm max}(e^*,M^*\cup e^*)\le 
\rho^{\rm max}(e^*,A^*\cup e^*)<\rho+1/\log n
<\alpha_*. 
\]
Hence $(e^*,M^*\cup e^*)$ is $\alpha_*$-sparse, 
and so, there is an $(e^*,M^*\cup e^*)$-extension of
$g$ in $\Gnp$. In other words, we can embed
a copy of $M^*$ in $\Gnp$ based at $g$. 
We call $g$ the {\it copy}
of $e^*$ in $\Gnp$, and say that $g$
has been {\it processed}.

{\it Next step.} Recall that $M^*$ is associated with 
a copy $H^*=H^*_{e^*}$ of $H$ that $e^*$ completes. 
Specifically, 
\[
M^*=\bigcup_{f\in E(H^*\setminus e^*)}C^*_f.
\] 
Let $f_1,\ldots,f_k$ be the red edges
in $H^*\setminus e^*$ (for which $C^*_f\neq f$). 
Each $f_i$ has an {\it unprocessed} copy $g_i$ in $\Gnp$, 
and a copy $C_{g_i}$ of $C^*_{f_i}$
embedded in $\Gnp$ based at $g_i$. 
Therefore, since $(C^*_{f_i},M^*_{f_i})$ is $\alpha_*$-sparse, 
there is an $(C^*_{f_i},M^*_{f_i})$-extension of 
$C_{g_i}$ in $\Gnp$. We process the edges $g_1,\ldots,g_k$
in turn, one at a time, embedding such extensions in $\Gnp$. 
In each step, for each red edge $f$ in $H^*_{f_i}$, an unprocessed
copy $g'$ in $\Gnp$ is created. 

Let us stress that red edges in ${\mathcal R}^*$ can 
have multiple unprocessed copies in $\Gnp$, 
as a single red edge may appear in different $H^*_{f_i}$;
see Figure \ref{F_copies} for an illustrative example. 

\begin{figure}[h]
\centering
\includegraphics[scale=1]{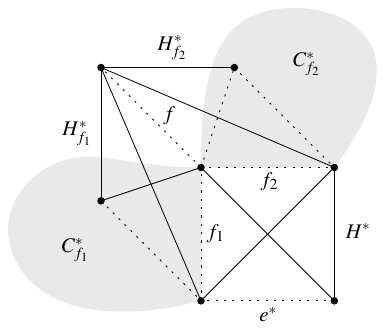}
\caption{There are two red edges
$f_1,f_2$ in the copy $H_*$ of $H$ that $e_*$ completes
in $W^*$. There is a red edge $f$  in both $H^*_{f_i}$
that is not contained in either of the $\alpha_*$-cores
$C^*_{f_i}$. In the next step of the construction, 
$(C^*_{f_i},M^*_{f_i})$-extensions of $C^*_{f_i}$ will be added to $\Gnp$. 
Since both of these extensions involve the red edge $f$, 
these extensions can place copies of $f$ at different locations in 
$\Gnp$. Each copy will need to be processed separately
later on in the construction. 
}
\label{F_copies}
\end{figure}

{\it Inductive step.} We continue recursively, until there 
is no unprocessed copy of any $f\in {\mathcal R}^*$
remaining in $\Gnp$. 
(Note that if $M^*_f$ is a black copy of $H$ minus an edge
in $W^*$, then processing a copy of $f$ in $\Gnp$ 
does not create any further edges to process.) 
In this way, 
by taking the union of all the extensions embedded 
during this recursive procedure, 
we obtain some subgraph $A_g\subseteq \Gnp$. 

{\it Activation.} 
Finally, we claim that $(g,A_g)\in{\mathcal A}(H)$, 
which completes the proof of the theorem. 
Indeed, by induction, we claim that every 
copy  
of each red edge $f\in{\mathcal R}^*$ in $\Gnp$ is activated.

For the base case, we first consider those $f$ for which 
all edges in $H^*_f\setminus f$ are 
black (i.e., the red edges that are 
activated at time $t=1$). 
Trivially, every copy of such an $f$ in $\Gnp$ is activated 
by the embedding of $H^*_f\setminus f$ in $\Gnp$, 
since clearly $M^*_f=H^*_f\setminus f$.

On the other hand, for the 
other red edges $f$ that are activated
at some time $t>1$, we recall that 
$f$ completes some copy $H_f^*$ of $H$. 
By construction, for each copy $g'$ of such an $f$ in $\Gnp$,
an extension $M^*_f$ of $f$ has been added to $\Gnp$, 
which contains a copy of $H^*_f$ based at $g'$. 
By induction, the copy in $\Gnp$
of each red edge in $H^*_f\setminus f$
is activated. 
Hence, $g'$ is also activated. 
Finally, recalling that $e^*$ is the final edge red edge in ${\mathcal R}^*$
to be activated,  
and since $g$ is the copy of $e^*$ in $\Gnp$, we see that
$g$ is activated.  
\end{proof}

\section{Hitting time results}
\label{S_HT}

In this section, to further demonstrate the 
utility of formula \eqref{E_rho} for $\rho(H)$, 
we discuss hitting time results for all $H$
with $\rho(H)\le 1$. We will 
also prove such results for all $H$ with 
a {\it leaf} (i.e., a vertex of degree 1). 
The main results are stated as Theorems \ref{thm:hitting time} 
and \ref{thm:hitting_connectivity} below.

We consider the usual random graph 
process $(\cG_m,0\le m\le{n\choose2})$, where $\cG_{m+1}$
is obtained from $\cG_m$  by adding a uniformly random edge (from amongst 
those missing from $\cG_m$). In discussing the {\it hitting time} for some graph property, we
mean the first time that $\cG_m$ has this property.

\subsection{Graphs $H$ with a leaf}
\label{S_leaf}

We recall that in \cite[Proposition 26]{BBM12}, for graphs $H$ with a leaf, it is shown that 
\begin{equation}\label{E_BBMleaf}
p_c(n,H)=\Theta(n^{-1/\beta(H)}),
\end{equation}
where
\begin{equation}\label{E_beta}
\beta(H)
=\min_{e\in E(H)}\rho^{\rm max}(\emptyset,H\setminus e)
=\min_{e\in E(H)}\max_{\emptyset\subset F\subset H\setminus e}\frac{e(F)}{v(F)}.
\end{equation}
In particular, this gives a large family of graphs $H$
for which $p_c(n,H)=O(n^{-1/\rho(H)})$ holds, as in 
Question \ref{Q_nolog} above. 

Let $\cH^-_{\beta}$ be the set of all graphs $H^-=H\setminus e$
obtained by removing an edge $e$ from $H$
for which $\rho^{\rm max}(\emptyset,H^-)=\beta(H)$
is minimal.

\begin{thm}
\label{thm:hitting time}
Suppose that $H$ has a leaf. Then, with high probability, 
the hitting time for $H$-percolation
in the 
random graph 
process $(\cG_m,0\le m\le{n\choose2})$
coincides with the first time that $\cG_m$
contains a copy of some $H^-\in \cH^-_{\beta}$.
\end{thm}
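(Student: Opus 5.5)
Let $\tau_1$ be the first time $\cG_m$ contains a copy of some $H^-\in\cH^-_\beta$, and let $\tau_2$ be the hitting time for $H$-percolation. One inequality is deterministic. If $\cG_m$ contains no copy of $H\setminus e$ for any edge $e$ of $H$, then no copy of $H$ in $K_n$ has all but one edge active. Hence no edge is ever activated, and $\langle\cG_m\rangle_H=\cG_m\neq K_n$ (for $m<\binom n2$). The first copy of \emph{some} $H\setminus e$ to appear need not lie in $\cH^-_\beta$, so we must show that, with high probability, at time $\tau_1$ no $H\setminus e$ with $\rho^{\max}(\emptyset,H\setminus e)>\beta$ has appeared yet. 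This follows from the standard first-moment threshold for containment (Bollob\'as): a graph $J$ appears around $m\approx n^{2-1/m(J)}$, where $m(J)=\rho^{\max}(\emptyset,J)$. The copies of $H^-\in\cH^-_\beta$ appear at scale $n^{2-1/\beta}$, and at time $\tau_1\le \omega n^{2-1/\beta}$ a denser $J$ is absent w.h.p. The subtlety is that $\tau_1$ is random, but a two-point bracketing handles this. For every $\omega\to\infty$, the first-moment bound shows that no denser $H\setminus e$ exists at $m=\omega n^{2-1/\beta}$, and the second-moment (or Bollob\'as) threshold gives $\tau_1\le\omega n^{2-1/\beta}$ w.h.p. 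So at $\tau_1-1$ nothing percolates, which gives $\tau_2\ge\tau_1$.

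For the reverse inequality, we show that at time $\tau_1$ the graph already percolates w.h.p. Fix a copy $X$ of $H^-=H\setminus e_0$ in $\cG_{\tau_1}$. First we activate the edge corresponding to $e_0$, so $X\cup e_0$ is a copy of $H$. The key use of the leaf is a growth step. Suppose some copy of $H$ has all edges active, and let $\ell$ be a leaf of $H$ with neighbour $u$. Re-embedding $H$ with $\ell$ sent to an arbitrary vertex $w$, while keeping the rest fixed, shows that every edge $uw$ becomes active. So the vertex playing the role of $u$ becomes adjacent in $\langle\cG_m\rangle_H$ to all of $[n]$.

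From there, we aim to show that the active graph contains a spanning star together with a copy of $H$ minus a leaf edge. We then bootstrap by moving embeddings of $H$ through that star: each vertex $w$ joined to the star centre, plus the already-active structure, lets us place a copy of $H$ whose leaf edge is the missing pair. Iterating, every vertex becomes a centre and all edges are activated. Here the leaf matters again. To activate $xy$ we need a copy of $H\setminus(\text{leaf edge})$ rooted so that the leaf's neighbour is $x$ and the leaf is $y$. That copy must be fully active, and we build it inside the clique of centres already created. So the plan is to prove, by induction, that the set of ``universal'' vertices (adjacent to everything) keeps growing until it reaches $v(H)$ vertices. Once it has $v(H)$ vertices, it contains $K_{v(H)}$, and every edge is the leaf edge of a copy of $H$ whose other edges lie within that set or meet it.

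The main obstacle is the initial step of generating more than one universal vertex from a single copy of $H$. Relocating the leaf produces one universal vertex $u$, but to produce a second one we need a full active copy of $H$ in which $u$ plays a non-leaf role adjacent to a new leaf neighbour. I would first try to embed $H\setminus(\text{leaf})$ using the star at $u$, by mapping a vertex of maximum ``coverage'' to $u$ and using $\cG_m$ edges for the rest. If that fails, I would fall back on the fact that, w.h.p. at time $\tau_1$, $\cG_m$ contains many disjoint copies of $H^-$ and also many copies of graphs of density $<\beta$ attached to them. This is exactly what BBM use in \cite[Proposition 26]{BBM12} for the upper bound, and I would cite their percolation argument to conclude $\tau_2\le\tau_1$.
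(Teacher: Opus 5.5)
Your half $\tau_2\ge\tau_1$ is fine and matches the paper; take $\omega\to\infty$ slowly enough, not arbitrary $\omega$. So do the leaf-relocation step producing a universal vertex and the observation that $v(H)-2$ universal vertices finish the job. The gap is the step you yourself flag as the main obstacle. That step is really the whole content of $\tau_2\le\tau_1$, and your fallback for it rests on a false premise. Every copy of a graph of $\cH^-_\beta$ present in $\cG_{\tau_1}$ must contain the edge added at step $\tau_1$, since otherwise it was already present at time $\tau_1-1$. So $\cG_{\tau_1}$ never contains two disjoint such copies. More generally, each $\beta$-dense part $F\subseteq H^-_\beta$ (with $e(F)/v(F)=\beta$) occurs only $O_p(1)$ times. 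What $\cG_{\tau_1}$ has in abundance is only $\beta$-sparse structure: many disjoint copies of graphs $F$ with $\rho^{\rm max}(\emptyset,F)<\beta$, and many disjoint $(H_1,H_2)$-extensions of every vertex set for $\beta$-sparse pairs $(H_1,H_2)$. Nor can you simply cite Proposition~26 of Balogh--Bollob\'as--Morris. It is a threshold statement, $p_c=\Theta(n^{-1/\beta})$, and does not tell you that the single copy of the dense core present at time $\tau_1$ already suffices.

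Your ``first try'' sends some vertex $z$ to the universal vertex and embeds the rest in $\cG_m$. This only works when the rest is $\beta$-sparse, i.e.\ when some $z\neq x$ ($x$ the leaf's neighbour) lies in every $\beta$-dense subgraph of $H^-_\beta$; this is essentially the paper's Case~2. It fails, for example, when $H^-_\beta$ has two vertex-disjoint $\beta$-dense subgraphs, such as $H$ being two disjoint copies of $K_4$ plus a pendant edge. Then $\rho^{\rm max}(\emptyset,H^-_\beta\setminus z)=\beta$ for every $z$, so $H^-_\beta\setminus z$ cannot be found in many disjoint copies.

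The missing idea is to reuse the one existing copy of the dense core and embed only $\beta$-sparse extensions of it. Suppose no $\beta$-dense subgraph contains $x$. Take the inclusion-maximal $\beta$-dense $H^*$; then $(H^*,H^-_\beta)$ is $\beta$-sparse. So the given copy of $H^*$ has $v(H)-2$ $(H^*,H^-_\beta)$-extensions that are disjoint outside it. After activating the copy of $e$ in each, their copies $x_i$ of $x$ become universal. In the mixed case the paper combines both tricks, using that non-empty intersections of $\beta$-dense subgraphs are $\beta$-dense. This yields a $\beta$-sparse pair $(Y\cup H^*_+,H^-_\beta\setminus z)$ with $z$ having no neighbours in $Y\cup H^*_+$, so the universal $x$ can stand in for $z$. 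Some such case analysis of where the $\beta$-dense subgraphs sit relative to $x$ is needed; as it stands, $\tau_2\le\tau_1$ is not proved.
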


The main idea is to note that the first
copy $H^-$ of $H$ minus an edge
to emerge in the random graph process will, with high probability, belong to $\cH_\beta^-$. 
In the following proof, we 
consider some vertex $x$ that is neighbors with a leaf, and take cases with respect to 
whether 
$H^-$ has subgraphs $F$ with density
$e(F)/v(F)=\beta(H)$ that contain $x$. 

\begin{proof}
Suppose that $H$ has a leaf.  

Recall that $\cH^-_\beta$ is the set
of all $H^-=H\setminus e$, with $e\in E(H)$, such that $\rho^{\rm max}(\emptyset,H^-)$
attains the minimal possible value $\beta=\beta(H)$, 
as in \eqref{E_beta} above. 
We also let $\cH^-$ denote the set of all $H^-=H\setminus e$, with $e\in E(H)$. 
Recall (see, e.g., \cite[Theorem 3.4]{JLR00}) that the hitting time $\tau^-$, 
in the 
random graph 
process $(\cG_m,0\le m\le{n\choose2})$, 
for the appearance of a copy of some graph in 
$\cH^-$ coincides with the hitting time $\tau^-_\beta$ for the appearance of a copy of some graph
in $\cH^-_\beta\subseteq \cH^-$. 
Of course, if $G\neq\langle G\rangle_H$, then $G$ must contain at least one copy of 
some $H^-\in\cH^-$. 
As such, it remains to prove that, with high probability, $\langle \cG_{\tau^-}\rangle_H=K_n$. 
To this end, we claim that $\langle G\rangle_H=K_n$
for {\it any} graph $G\subseteq K_n$ that contains:  
\begin{itemize}[nosep]
\item  some $H^-_\beta$ that is a copy of a graph in $\cH^-_\beta$; 
\item  at least $2v(H)$ disjoint copies of every graph $F$ on at most $v(H)$ vertices 
with $\rho^{\rm max}(\emptyset,F)<\beta$; 
\item at least $2v(H)$ disjoint $(H_1,H_2)$-extensions of 
every set of vertices $U$ of size $v(H_1)$ for every 
$\beta$-sparse pair $(H_1,H_2)$ with $v(H_2)\le v(H)$.
\end{itemize}
We note that, with high probability, $\cG_{\tau^-}$ has all of these properties; 
see \cite[Remark 3.7]{JLR00},
\cite[Theorem 3]{Spe90}, 
and Lemma~\ref{L_Janson} above. 
Hence the claim implies the theorem. 

In proving the claim, we take three cases. 
Let $x$ be the neighbor of some leaf in $H$. 
In what follows, we say that a subgraph $F\subseteq H^-_\beta$
is $\beta$-dense if $e(F)/v(F)=\beta$. 

{\it Case 1.} Suppose that no $\beta$-dense subgraph 
$F\subseteq H^-_\beta$ contains $x$. 
Let $H^*$ be an inclusion-maximal $\beta$-dense subgraph of 
$H^-_\beta$. 
Along similar lines as Lemma~\ref{L_sparse} above, 
it can be seen that the pair $(H^*,H^-_\beta)$ is $\beta$-sparse. 
Hence, we can find  $v(H)-2$ disjoint 
$(H^*,H^-_\beta)$-extensions of $H^*$. 
Let $x_1,\ldots,x_{v(H)-2}$ be the copies of $x$ in these extensions. 
To see that $\langle G\rangle_H=K_n$, 
note that we can activate:  
\begin{itemize}[nosep]
\item the missing edge $e$ in $H^-_\beta$; 
\item the copy of $e$ in each $(H^*,H^-_\beta)$-extension of $H^*$, 
if the endpoints of $e$ 
are not both in $H^*$ (otherwise there is no such copy); 
\item all edges that contain one of the vertices $x_1,\ldots,x_{v(H)-2}$;
\item and then finally any other missing edge $\{u,v\}$, 
by completing a copy of $H$ with vertices $u,v,x_1,\ldots,x_{v(H)-2}$.
\end{itemize}

In the next two cases, we assume there is a $\beta$-dense subgraph
of $H^-_\beta$ that contains $x$, and let 
$H^*_x$ be an inclusion-minimal subgraph with these properties. 

{\it Case 2.} 
Suppose that all  $\beta$-dense subgraphs
of $H^-_\beta$ contain $x$. 
Then all such subgraphs contain $H^*_x$. 
Hence, for any vertex $z\neq x$ in $H^*_x$, 
we have that $\rho^{\rm max}(\emptyset,H^-_\beta\setminus z)<\beta$. 
Therefore, we can find $v(H)-2$ disjoint copies of $H^-_\beta\setminus z$ 
in $G$ 
that are also disjoint from 
$H^-_\beta$. 
Let $x_1,\ldots,x_{v(H)-2}$ be the copies of $x$ in these subgraphs.
To see that $\langle G\rangle_H=K_n$, 
we activate: 
\begin{itemize}[nosep]
\item the missing edge $e$ in $H^-_\beta$; 
\item all edges that contain $x$ (and note that $x$ may now 
play the role of the missing vertex $z$ in the $v(H)-2$ copies of $H^-_\beta\setminus z$); 
\item the copy of $e$ in every extended (with $x$ playing the role of $z$) copy of $H^-_\beta\setminus z$, 
if $e$ is not incident to $x$; 
\item all edges that contain one of the vertices $x_1,\ldots,x_{v(H)-2}$; 
\item and then finally any missing edge $\{u,v\}$ by completing a copy of $H$ 
with vertices $u,v,x_1,\ldots,x_{v(H)-2}$.
\end{itemize}

{\it Case 3}. Finally, we  assume that, while $H^-_\beta$ has a $\beta$-dense 
subgraph that contains $x$, not all such subgraphs contain $x$. 
In this case, we will use the fact that 
an intersection of any $\beta$-dense subgraph $F\subseteq H^-_\beta$ 
with $H^*_x$ is either empty or $\beta$-dense. 
This follows by the next claim 
(taking $A=H^-_\beta$, $A_1=F$ and $A_2=H^*_x$). 

\begin{claim}
Suppose that $\rho^{\rm max}(\emptyset,A)=\beta$. 
Let $A_1,A_2$ be $\beta$-dense subgraphs of $A$ such that 
$A_1\cap A_2\neq\emptyset$. Then $A_1\cap A_2$ is $\beta$-dense. 
\end{claim}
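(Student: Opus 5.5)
The plan is the standard supermodularity argument for edge counts of subgraphs, combined with the maximality of $\beta$. Throughout, $\beta$-dense means $e(F)=\beta v(F)$. Since $\rho^{\rm max}(\emptyset,A)=\beta$, every nonempty subgraph $F\subseteq A$ satisfies $e(F)\le\beta v(F)$.

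First, write $I=A_1\cap A_2$ and $U=A_1\cup A_2$, meaning the subgraphs with vertex sets $V(A_1)\cap V(A_2)$, respectively $V(A_1)\cup V(A_2)$, and edge sets $E(A_1)\cap E(A_2)$, respectively $E(A_1)\cup E(A_2)$. Vertex counts are modular:
\[
v(U)+v(I)=v(A_1)+v(A_2).
\]
Edge counts satisfy
\[
e(U)+e(I)=e(A_1)+e(A_2).
\]
Any edge lying in both $A_1$ and $A_2$ is counted in $I$. If instead we take $I$ to be the subgraph of $A$ induced on the common vertices, we get $e(U)+e(I)\ge e(A_1)+e(A_2)$, which is even better for us. To be safe, I will use the edge-intersection version; then the equality holds, and $I$ is a genuine subgraph of $A$ whose vertex set is nonempty by hypothesis.

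Next, combine these identities with density:
\[
e(I)=e(A_1)+e(A_2)-e(U)\ge \beta v(A_1)+\beta v(A_2)-\beta v(U)=\beta v(I).
\]
This uses $e(U)\le\beta v(U)$, which holds because $U\subseteq A$ is nonempty. On the other hand $I\subseteq A$ is nonempty, so $e(I)\le\beta v(I)$. Hence $e(I)=\beta v(I)$, that is, $I$ is $\beta$-dense, and as a by-product so is $U$.

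The only subtle point is the convention for ``intersection'' and ``nonempty''. The subgraphs in the maximum defining $\rho^{\rm max}(\emptyset,\cdot)$ are nonempty graphs, and isolated vertices are allowed. So a vertex-nonempty $I$ with possibly few edges is still admissible, and the bound $e(I)\le\beta v(I)$ applies to it. I expect no real obstacle beyond fixing this convention consistently with how the claim is applied to $F\cap H^*_x$ in Case 3.
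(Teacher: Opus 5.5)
Your proposal is correct and is the same argument as the paper's. Inclusion--exclusion gives $e(A_1\cup A_2)\ge \beta(v(A_1)+v(A_2))-e(A_1\cap A_2)$, and combining this with the maximal-density bounds $e(A_1\cup A_2)\le\beta v(A_1\cup A_2)$ and $e(A_1\cap A_2)\le\beta v(A_1\cap A_2)$ forces $e(A_1\cap A_2)=\beta v(A_1\cap A_2)$. Where the paper loosely writes ``since $A_1\cup A_2$ is $\beta$-dense,'' you state the upper bound on $e(A_1\cup A_2)$ that the step actually uses, and you correctly obtain the density of the union as a by-product.
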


\begin{proof}
Since $A_1,A_2$ are $\beta$-dense, it follows that 
\[
e(A_1\cup A_2)
\ge\beta(v(A_1)+v(A_2))-e(A_1\cap A_2).
\]
Therefore, since $A_1\cup A_2$ is $\beta$-dense, 
it follows that 
$e(A_1\cap A_2)=\beta(v(A_1\cap A_2))$, 
and so $A_1\cap A_2$ is $\beta$-dense, as claimed. 
\end{proof} 

If there is no $\beta$-dense subgraph of $H^-_\beta$
that does not contain $x$ and have a non-empty intersection
with $H^*_x$, then we put $H^*=\emptyset$. 
Otherwise, let $H^*$ be 
an inclusion-maximal
$\beta$-dense subgraph of $H^*_x$ 
that does not contain $x$. 
Let $H^*_-$ be an inclusion-minimal $\beta$-dense subgraph of $H^*$ 
(where $H^*_-=\emptyset$ if $H^*=\emptyset$). 
Also, let $H^*_+$ be an inclusion-maximal $\beta$-dense subgraph 
of $H^*\setminus H^*_-$, if such a subgraph exists;
otherwise, if $\rho^{\rm max}(\emptyset,H^*\setminus H^*_-)<\beta$, we put 
$H^*_+=\emptyset$. 

Next, we let $Y$ be the (possibly empty) union 
of all $\beta$-dense subgraphs of $H^-_\beta$ that do not contain $x$
and are vertex-disjoint with $H^*$. 
Note that $Y\cap H^*_x=\emptyset$, and 
in particular $x\notin Y$. 
There are no edges between $Y$ and $H^*_x$, as otherwise
the density of $Y\cup H^*_x$ would be strictly greater than $\beta$.

We claim that every $\beta$-dense subgraph of $H^-_\beta$ 
that contains $Y\cup H^*_+$ as a {\it proper} 
subgraph must contain all of $H^*_-$. 
Indeed, if such a subgraph: 
\begin{itemize}[nosep]
\item contains $x$ but not all of $H^*_-$, then this would contradict the minimality of $H^*_x$;
\item does not contain $x$, and some but not all of $H^*_-$, then this would 
contradict the minimality of $H^*_-$;
\item does not contain $x$, or any of $H^*_-$, then this would contradict 
the maximality of $H^*_+$ (as then, by the choice of $Y$ 
and since $Y\cup H^*_+$ is a proper subset, it would contain some other vertex 
$H^*\setminus H^*_-$ that is not already in $H^*_+$). 
\end{itemize}

To conclude, take any vertex $z$ in 
$H^*_-$ if $H^*\neq\emptyset$, and take any $z\neq x$ in $H^*_x$ otherwise. 
 The pair $(Y\cup H^*_+,H^-_\beta\setminus z)$ is $\beta$-sparse, 
so we can find $v(H)-2$ disjoint $(Y\cup H^*_+,H_\beta^-\setminus z)$-extensions 
of $Y\cup H^*_+$ 
in $G$ that also disjoint from $H^-$. 
Since $z$ does not have neighbors in $Y\cup H^*_+$, 
each such extension can be extended to a copy of $H^-$ by replacing $z$ with $x$, 
as soon as all edges between $x$ and vertices outside of $H^-$
have been activated. 
Hence, using the copies 
$x_1,\ldots,x_{v(H)-2}$ 
of $x$ in these $v(H)-2$ resulting copies of $H^-$, 
the remaining edges of $K_n$ can be activated in the same way as before. 

The proof is complete. 
\end{proof}

\subsection{A characterization}

Our next result characterizes
all graphs $H$ with $\rho(H)\le 1$. Recall (as discussed in Section \ref{S_Intro})
that we always assume 
that $H$ has no isolated vertices. 

\begin{lemma}
If $H$ has at least two cycles then $\rho(H)\ge 1$. Moreover, $\rho(H)=1$ if and only if 
$H$ 
\begin{itemize}[nosep]
\item is a cycle, or  
\item has at least two cycles and some edge $e$ for which 
each connected component of 
$H\setminus e$ has at most one cycle. 
\end{itemize}
\label{lm:rho=1}
\end{lemma}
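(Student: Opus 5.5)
The plan is to reduce everything to counting the \emph{excess} $e(F)-v(F)$ of the relevant subgraphs, because $\frac{e(F)-1}{v(F)-2}\ge1$ is equivalent to $e(F)\ge v(F)-1$. It also helps that adding to $F$ a component with $e=v$ (unicyclic) or a tree hanging off $F$ does not change $e(F)-v(F)$, respectively lowers it by exactly one per tree attached at a new component.

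\emph{Lower bound $\rho(H)\ge1$ when $H$ has two cycles.} Fix $(e,A)\in\mathcal A(H)$. I would run an induction over the WGA in the style of Lemma~\ref{L_Fs}. The claim to carry is: for every activated edge $f$ there is $f\subset F_f\subseteq W_f\cup f$ with $e(F_f)-1\ge v(F_f)-2$, and with $v(F_f)\ge3$.

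The candidate $F_f$ is $f$ together with spanning forests of the components of $W_f$ containing the endpoints of $f$.
\begin{itemize}
\item If the two endpoints lie in different components, the count is exactly $e(F_f)-1=v(F_f)-2$.
\item If they lie in the same component, the count is strictly larger.
\end{itemize}
The only danger is an endpoint that is isolated in $W_f$, which can happen when $H$ has a leaf. To exclude the degenerate case $v(F_f)=2$, I use that $H_f\setminus f$ contains a cycle, since $H$ has two cycles. That cycle lies in $W_f$ and gives a component of excess $\ge0$. Adjoining it, joined by a path if possible, keeps the inequality and forces $v(F_f)\ge3$. Taking the infimum over $\mathcal A(H)$ then gives $\rho(H)\ge1$.

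\emph{Sufficiency, i.e.\ $\rho(H)\le1$ in the two listed cases.} Here I build explicit families of witnesses whose rooted $2$-density tends to $1$.
\begin{itemize}
\item For a cycle $C_k$, start with a long path of length $L$. The $C_k$-dynamics first add chords between vertices at distance $k-1$, and iterating this activates the edge joining the two ends. The rooted density is then $L/(L-1)\to1$, and every subgraph $F\ni e$ has at most the same ratio, since subpaths are no denser.
\item For $H$ with an edge $e_0$ such that every component of $H\setminus e_0$ is unicyclic or a tree, I string together $h$ copies of $H\setminus e_0$ in a ladder-like chain. Copy $i+1$ reuses the edge $e_0$ activated in copy $i$, so that the copies share only that edge. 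Since each component of $H\setminus e_0$ has excess at most $0$, each new copy adds at most as many edges as new vertices. The rooted ratio is then $1+O(1/h)$, and I will check that no subgraph containing the final edge is denser, using Lemma~\ref{L_rho_comb}.
\end{itemize}

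\emph{Necessity}, which I expect to be the main obstacle. Suppose $H$ has at least two cycles, but for every edge $e$ some component of $H\setminus e$ has at least two cycles, i.e.\ positive excess. I want to show $\rho(H)\ge1+\delta$ for a fixed $\delta>0$. The natural attempt is to strengthen the induction above to track the surplus $s(F_f)=e(F_f)-v(F_f)+1$. Each copy $H_f$ should contribute surplus at least one unit per roughly $v(H)$ new vertices, which would yield $e(F)-1\ge(1+\delta)(v(F)-2)$ with $\delta\approx1/v(H)$.

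The difficulty is that overlaps between the witness graphs $W_g$ of different edges $g\in H_f\setminus f$ may cancel surplus. I would handle this as in the balanced-graph sketch, via the red/black component merging argument of Section~\ref{S_balH}, using the bound $e(H'\cap C)\le v(H'\cap C)-1+(\text{cycles in }H'\cap C)$. I am not yet sure this closes without losing surplus at the merge steps; that is the point to check carefully.

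Finally, for completeness of the ``only if'', the case in which $H$ has at most one cycle and is not a cycle must also be excluded from $\rho(H)=1$. Such $H$ has a leaf, and I would verify $\rho(H)<1$ directly from a single copy of $H\setminus e$ with $e$ a leaf edge. Alternatively this follows from \eqref{E_BBMleaf} combined with Theorem~\ref{T_main}, since then $\beta(H)<1$.
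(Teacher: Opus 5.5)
\textbf{There are genuine gaps in the sufficiency construction, in the necessity direction, and in the case with at most one cycle.}

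\textbf{The ladder construction fails.} In your chain, copy $i+1$ meets the earlier graph only in the two endpoints of the reused edge. It therefore adds $v(H)-2$ new vertices but $e(H)-2=e(H\setminus e_0)-1$ new black edges. The hypothesis $e(H\setminus e_0)\le v(H)$ thus gives excess at most $+1$ per copy, not $\le 0$, and the rooted density of the ladder tends to $\lambda(H)=(e(H)-2)/(v(H)-2)$. For connected $H$ with two independent cycles we have $e(H)=v(H)+1$, so this limit is $(v(H)-1)/(v(H)-2)>1$. For $K_{2,3}$ you get $4/3$, although the lemma asserts $\rho(K_{2,3})=1$. The ladder can only help when $e(H)\le v(H)$, e.g.\ for two disjoint triangles.

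The missing idea is to reuse one fixed cycle as a catalyst for every activation, rather than replicating $H$. For two cycles $C_\ell,C_m$ sharing a vertex $x$, the paper hangs a path of length $(\ell-1)+k(\ell-2)$ from $x$ on a copy of $C_m$. It then activates chords from $x$ one at a time, each step costing $\ell-2$ new vertices and $\ell-2$ new black edges, so the excess stays $0$. The dumbbell and theta cases are analogous. You must also keep the target edge far from the catalyst: if $f$ has an endpoint on $C_m$, then $F=C_m\cup f$ already has rooted density $m/(m-1)$. Hanging long paths on both sides, as in the paper's theta construction, avoids this.

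A smaller point concerns the cycle case. A path of arbitrary length does not work, because for even $k$ the $C_k$-dynamics preserve bipartiteness, so the ends of an even-length path are never joined. Taking $L=(k-1)+j(k-2)$ works.

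\textbf{Necessity is left open, and the surplus-tracking induction is unnecessary.} Since $\rho^{\max}$ is a maximum over subgraphs, one bounded dense piece suffices. The observation you are missing is this. The first edge $g$ activated by the $H$-dynamics started from $A$ completes a copy $H_g$ with $H_g\setminus g\subseteq A$ entirely black, and $H_g\setminus g$ is a copy of $H\setminus g'$ for some $g'\in E(H)$. By hypothesis $H\setminus g'$ has a component $K$ with two cycles, so $e(K)\ge v(K)+1$ and $v(K)\le v(H)$. Then $F=K\cup f$ satisfies $(e(F)-1)/(v(F)-2)\ge (v(K)+1)/v(K)\ge 1+1/v(H)$, which gives $\rho(H)\ge1+1/v(H)$.

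The same observation repairs your lower bound. There, the claim that the cycle of $H_f\setminus f$ lies in $W_f$ is false, since its edges may be red. Instead, any cycle $C\subseteq A$ gives $F=C\cup f$ with ratio at least $1$.

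\textbf{Your exclusion of the case with at most one cycle also fails.} For connected $H$, a single copy $A=H\setminus e$ never certifies $\rho(H)<1$, because $F=H$ has ratio $(e(H)-1)/(v(H)-2)\ge1$; for a triangle with a pendant edge it is $3/2$. The paper instead deletes an edge of the cycle to get a forest $H^-$. It takes $v(H)-2$ disjoint copies of $H^-$ together with two isolated vertices $y,z$, and targets $f=yz$. For every $F\ni f$ with $v(F)\ge3$, $F\setminus f$ is a forest with at least three components, so the ratio is at most $(v(F)-3)/(v(F)-2)<1$. The edge $f$ is activated in three stages:
\begin{itemize}
\item complete each copy of $H^-$;
\item use the neighbours $x_i$ of a leaf to activate all edges at each $x_i$;
\item complete a copy of $H$ in the clique on $\{x_1,\dots,x_{v(H)-2},y,z\}$.
\end{itemize}
Your fallback via \eqref{E_BBMleaf} and Theorem~\ref{T_main} is circular for ruling out $\rho(H)=1$. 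The paper derives the $\rho\le1$ half of Theorem~\ref{T_main} in Section~\ref{S_HT} from this very lemma.
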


\begin{proof}
If $H$ has at least two cycles, then no edge of $H$ is in all of
its cycles. Hence, for any $(e,A)\in\mathcal{A}(H)$, 
there is a cycle in $A$, and so $\rho(H)\ge 1$. 

Next, let us assume $\rho(H)=1$ and that $H$ is not a cycle. 

Towards a contradiction, let us assume that  
$H$ has only one cycle but is not itself a cycle. Then 
there is some vertex $x$ that is neighbors with a leaf in $H$. 
To obtain a contradiction with our assumption that $\rho(H)=1$, 
we will construct a pair $(f,A)\in\mathcal{A}(H)$
for which $\rho^{\rm max}(f,A\cup f)<1$ as follows: 
Obtain $H^-$ from $H$ by deleting an edge $f$ in its cycle. 
We obtain $A$ by taking a disjoint union of copies $H_1^-,\ldots,H_{v(H)-2}^-$
of $H^-$ together with two isolated vertices $y,z$. Let $f$ be the missing 
edge $\{y,z\}$. Clearly, $\rho^{\rm max}(f,A\cup f)<1$. 
To see that $A$ activates $f$, 
we first activate the copies $f_i$ 
of the missing edge $f$ in each of the $H_i^-$.
Let $x_i$ be the copies of $x$ in the $H_i^-$, and note that, by the choice of $x$, 
every missing edge
containing some $x_i$ can be activated. 
In particular, all edges of the clique on $x_1,\ldots,x_{v(H)-2},y,z$
except $f$ can be activated in this way; and then $f$ 
can also be activated. 

Towards another contradiction,  
suppose that $H$ does not have an edge whose 
deletion leaves at most one cycle in every connected component. 
Then, for every $(f,A)\in\mathcal{A}(H)$, there exists a connected subgraph 
$F\subseteq A$ on at most $v(H)$ vertices with at least two cycles, 
and hence $\rho^{\rm max}(f,A\cup f)\ge 1+1/v(H)$, in contradiction 
to out assumption that $\rho(H)=1$. 

Finally, let us assume that $H$ is either a cycle or has at least two cycles 
and an edge $e$ whose deletion leaves at most one cycle in every connected component. 
It remains only to prove that $\rho(H)=1$.

{\it Case 1.} The case that $H=C_m$ is a cycle is the simplest. 
For instance, as noted in 
\cite[Proposition 24]{BBM12}, if we let $A$ be the union of a triangle
and a path of length at least $m$ that share exactly one vertex, then 
$\langle A\rangle_H$ is the clique on $V(A)$. 
(Other constructions are possible, 
such as a long path and an extra edge between a pair of vertices at distance $m-1$.)
In particular, 
the edge $f$ between the endpoints 
of the path is activated. Since the path can be arbitrarily long, 
this shows that $\rho(C_m)=1$. 

{\it Case 2.} In the second case, $H$ is a disjoint union of 
a graph $H_2$ that has at most one cycle in each of its connected components, 
and a connected graph $H_1$ that contains either:
\begin{itemize}[nosep]
\item a single cycle, 
\item two cycles that share exactly one vertex,
\item two disjoint cycles connected by a path, or 
\item three paths that share their endpoints (i.e., two cycles that share a path). 
\end{itemize}

{\it Case 2a.} If $H_1$ is a cycle
then we can argue along the same lines as in Case 1, 
taking $A$ to be the disjoint union of a copy of $H_2$ together with a
triangle attached to an arbitrarily long path, as before. If $H_1$ contains
a single cycle, we proceed in the same way, but attach 
large enough perfect trees to all vertices along the path and triangle. 

In the remaining cases, for simplicity, let us assume that $H=H_1$,  
since otherwise, in all of our constructions of $A$, we can simply add a disjoint 
copy of $H_2$ to $A$. Likewise, let us assume that the minimum degree
$\delta(H)=2$, since otherwise we can simply add perfect trees to all vertices in $A$. 

{\it Case 2b.} Suppose that $H$ is the union of two cycles, of lengths $\ell$ and $m$, 
that share a single vertex $x$. Then to obtain $A$, take a copy of $C_m$
and attach an arbitrarily long path of length $(\ell-1)+k(\ell-2)$, for some $k\ge0$. 
By induction, the edge $f$ between the endpoints of the path is activated. 
Taking $k$ arbitrarily large, we see that $\rho(H)=1$ in this case. 

{\it Case 2c.} If $H$ is the union of two cycles, of lengths $\ell$ and $m$, 
that are connected by a path of length $p$, 
the argument is essentially the same as in Case 2b. We obtain $A$, we take a copy of 
$C_m$, attach a path of length $p$ from $C_m$ to some other vertex $x$, and then 
attach a path of length  $(\ell-1)+k(\ell-2)$ to $x$. 
By induction, the edge $f$ from $x$ to the end of
the path is activated. 

{\it Case 2d.} Finally, suppose that $H$ is the union of three paths, 
of lengths $\ell_1$, $\ell_2$ and $\ell_3$, that share their endpoints. 
To construct $A$, we start with two paths of lengths $\ell_1$ and $\ell_2$
that share their endpoints $x$ and $y$. Next, we attach a path $P_x$ to $x$ 
of length $k(\ell_2+\ell_3-2)+(\ell_2-1)$, and a path $P_y$ to $y$ of length $2k(\ell_1-1)$. 
These paths are otherwise disjoint with the two paths between $x$ and $y$,
and are disjoint with each other. Let $f$ be the missing edge between the endpoints between 
these paths. 
Note that the edge between $y$ and the vertex along $P_x$ at distance
$\ell_3-1$ from $x$ can be activated. Thereafter, the edge between $x$ and the vertex along $P_y$
at distance $\ell_1-1$ from $y$ can be activated. In this way, 
by induction, it can be seen that $f$ 
is eventually activated. 
\end{proof}

\subsection{Graphs $H$ with $\rho<1$}
It follows immediately 
by Lemma \ref{lm:rho=1}
that $\rho(H)<1$ if and only if $H$ has at most one cycle and a leaf. 
For such a graph $H$ and an edge $e\in E(H)$, 
let $f(e,H)$ be the maximum number of edges in a tree in $H\setminus e$. 
Let 
$f(H)=\min_ef(e,H)$, 
minimizing over all $e\in E(H)$ for which $H\setminus e$ is a forest. 
Then, it can be seen that 
$p_c(n,H)=\Theta(n^{-(f+1)/f})$. 
Indeed, when $p\ll n^{-(f+1)/f}$, with high probability $\Gnp$ has no 
trees of size $f$ (see, e.g., \cite[Theorem 3.4]{JLR00}), and therefore no edge can be activated. 
On the other hand, when $p\gg n^{-(f+1)/f}$, with high probability $\Gnp$ 
has many copies of any tree with at most $f$ edges 
(see, e.g., \cite[Remark 3.7]{JLR00}), and this is enough to activate all of $K_n$.

In fact, by Theorem~\ref{thm:hitting time}
we obtain the following hitting time result. 
Let $\mathcal{F}_H$ be the family of all forests $F$ with the following property:
For each $F\in \mathcal{F}_H$, there is some $e\in E(H)$ such that $F$ consists 
of trees of size $f(H)$ in the forest $H\setminus e$. Then, by 
Theorem~\ref{thm:hitting time}, 
the hitting time for $H$-percolation 
coincides with the appearance of a copy of some 
forest $F\in \mathcal{F}_H$.

\subsection{Graphs $H$ with $\rho=1$}
\label{S_rho1}

We note that, if $\rho(H)=1$ and $H$ has a leaf, then the hitting time result  
Theorem \ref{thm:hitting time} implies that the 
$H$-percolation threshold is coarse. 
Our next result shows that the threshold is sharp for all other $H$ with $\rho(H)=1$. 

\begin{thm}
\label{thm:hitting_connectivity}
Let $H$ be a graph with $\rho(H)=1$ and minimum degree $\delta(H)=2$. 
Then, with high probability, the 
hitting time  
in the 
random graph 
process $(\cG_m,0\le m\le{n\choose2})$
for $H$-percolation coincides with that of connectivity. 
\end{thm}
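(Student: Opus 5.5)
The proof has two halves: a lower bound, that $H$-percolation cannot occur before connectivity, and an upper bound, that at the connectivity hitting time $\tau_c$ the process $H$-percolates with high probability.

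\emph{Lower bound.} Suppose $G$ is disconnected, with components $X$ and $V\setminus X$. We claim no edge between $X$ and $V\setminus X$ is ever activated. Let $uv$ be the first such edge, with $u\in X$, activated through a copy $H_{uv}$ of $H$. Every other edge of $H_{uv}$ lies inside $X$ or inside $V\setminus X$. Hence $uv$ is a bridge of $H_{uv}$, since removing it separates the vertex sets on the two sides. A graph with $\delta(H)=2$ may still contain bridges, for example two cycles joined by a path. So the argument should use $\rho(H)=1$, or the structure given by Lemma~\ref{lm:rho=1}. If $H$ is a cycle, $H$ has no bridge and we are done. In the other case of Lemma~\ref{lm:rho=1}, I would use the direct fact that activation through a bridge would give $\rho(H)<1$. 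Concretely, from $(uv,H_{uv}\setminus uv)$ one builds a witness for $uv$ whose two sides are attached only at $u$ and at $v$. Gluing such witnesses in the style of the construction in the proof of Lemma~\ref{lm:rho=1}, Case~1, would produce $(f,A)\in\mathcal A(H)$ with $\rho^{\max}(f,A\cup f)<1$, contradicting $\rho(H)=1$. Alternatively, one can argue probabilistically. Just before $\tau_c$ the only obstruction is an isolated vertex. A cross edge $uv$ with $u$ isolated would need $u$ to have degree~$1$ in $H_{uv}$, which contradicts $\delta(H)=2$. This is much cleaner, and I would use it: with high probability, $\cG_{\tau_c-1}$ consists of a giant component plus one isolated vertex $w$. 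Every edge at $w$ is a cross edge, and the first one activated would need $\deg_{H}(w)=1$. So $w$ stays isolated, and no percolation occurs before $\tau_c$.

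\emph{Upper bound.} At $\tau_c$ we have $p\approx \log n/n$, so $np^{\rho}\approx\log n$ with $\rho=1$. Theorem~\ref{T_UB} does not directly apply, since it requires $\rho>1$ and larger $p$. Instead I would use the explicit efficient witness graphs from the proof of Lemma~\ref{lm:rho=1}. These are a bounded-size ``seed'' (a triangle, a cycle $C_m$, or two paths sharing endpoints) together with long paths, where each added path vertex adds exactly one edge. The plan has three steps. First, find with high probability one copy of the seed $S$ of bounded size in $\cG_{\tau_c}$. Its density is only slightly above~$1$ (a unicyclic seed has density exactly $1$, and for Case~2d the density is $(\ell_1+\ell_2)/(\ell_1+\ell_2-1)$), so it appears at $p\gg n^{-1}$. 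Second, show that the seed activates all edges from its anchor vertex $x$ to every vertex reachable by a path. Following the induction in the lemma, activation walks along paths, and each step of the path construction adds a new neighbour of $x$. Since $\cG_{\tau_c}$ is connected, every vertex is joined to $x$ by a path, so every edge at $x$ is activated. Here the path lengths must be adjusted modulo $\ell-2$ (or the relevant period), and I would use non-bipartiteness and short odd cycles near $x$ to fix the parity. Third, repeat from new anchors. Once $x$ is universal, each of its neighbours gains the same local structure, so more seeds are created and eventually we find $v(H)-2$ universal vertices. Any remaining edge $uv$ then completes a copy of $H$ on $u$, $v$ and these vertices, as in the proof of Theorem~\ref{thm:hitting time}.

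\emph{Main obstacle.} The hardest part is the second step for general $H$ with $\delta(H)=2$. It requires showing that arbitrary paths in $\cG_{\tau_c}$, not only paths of the exact lengths $(\ell-1)+k(\ell-2)$, let activation spread from the anchor to all vertices. I would first handle $H=C_m$: a path of length $m-1$ between two already-adjacent active vertices closes a cycle. I would then check, case by case over Cases~2b to~2d of Lemma~\ref{lm:rho=1}, that a bounded dense structure near $x$ (several short cycles, available with high probability in the giant component) absorbs the length-residue issue.
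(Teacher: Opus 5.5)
Your lower bound is correct and matches the paper's argument. Since $\delta(H)=2$, an isolated vertex can never receive an activated edge; whp $\cG_{\tau_{\cC}-1}$ has an isolated vertex; and monotonicity handles earlier times. The bridge discussion can be dropped.

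The upper bound, however, has a genuine gap at Step~2, and Step~3 rests on it. The claim that a bounded seed at $x$ activates $xv$ for every $v$ joined to $x$ by a path does not follow from Lemma~\ref{lm:rho=1}. Its induction only establishes activation of $xp_i$, along a path $x=p_0,p_1,\dots$, for the prescribed indices $i=(\ell-1)+k(\ell-2)$ in Case~2b. In Case~2d it uses two coordinated paths from $x$ and $y$ of prescribed lengths.

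As a deterministic statement about connected graphs, the claim is false. Take $H$ to be two $4$-cycles sharing a vertex, so $\rho(H)=1$ and $\delta(H)=2$. As $H$ is bipartite and bridgeless, $H_{uv}\setminus uv$ is connected and every $u$--$v$ path in it has odd length. Hence the $H$-dynamics preserve the bipartition of any bipartite $G$. In a connected bipartite graph containing the $C_4$ seed, $x$ therefore never becomes universal.

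Your proposed remedy (short odd cycles near $x$, a bounded dense structure absorbing the residue problem, checked case by case) is exactly where the difficulty lies, and it is not carried out. As stated it also cannot work. Whp $\cG_{\tau_{\cC}}$ contains no subgraph of bounded size with more edges than vertices, since the expected count is $O(n^{v-e}(\log n)^{e})\to0$. So two short cycles are never within bounded distance of each other, and any residue-fixing device must have size growing with $n$. For the same reason, a fixed seed of density ``slightly above~$1$'' would not appear at all; in fact the Case~2d seed is a cycle, of density exactly~$1$.

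The missing idea is to stop propagating activation through the uncontrolled geometry of $\cG_{\tau_{\cC}}$ and instead embed a prescribed witness at every pair. The paper takes $(f,A)\in\cA(H)$ from the proof of Lemma~\ref{lm:rho=1}, with $v(A)=\Theta(\log n)$ and all path lengths exactly as required. It lets $\cP$ be the property that every pair of non-isolated vertices is the base of an $(f,A)$-extension.

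At $p=(1\pm o(1))\log n/n$ a given pair has $n^{\omega(1)}$ such extensions in expectation, so prescribing lengths (hence residues) costs nothing. Janson's inequality gives failure probability $n^{-\omega(1)}$ per pair. It is applied after revealing the neighbourhoods of $v_1,v_2$ and using that whp every non-isolated vertex has a neighbour of degree at least $0.5\log n$, which handles endpoints of low degree. Since $\cC\cap\cP$ is increasing and $\tau_{\cC}=\tau_{\cC\cap\cP}$ whp, every missing edge is activated directly by its own witness, and no spreading argument is needed.

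If you want to keep your seed-plus-connectivity approach, there is a cleaner endgame than Step~3. A clique $K$ with $|K|\ge v(H)-1$ absorbs any vertex $y$ adjacent to it. To activate $yw$ for $w\in K$, where $yz$ is active with $z\in K$, map a degree-$2$ vertex of $H$ to $y$, its neighbours to $z$ and $w$, and the rest of $H$ into $K$. Connectivity then finishes once a single such clique exists in the closure. But in general (e.g.\ for the two-$C_4$ example) producing that clique already requires connected structures of size growing with $n$, i.e.\ an embedding argument of the paper's type.
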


We note that this result
extends the result in \cite[Proposition 24]{BBM12} 
for the cases $H=C_k$, $k\ge3$, and $H=K_{2,3}$.  

\begin{proof}
If $H$ has minimum degree $\delta(H)=2$ then, according  
to Lemma~\ref{lm:rho=1}, $H$ consists of connected components 
$H_1,\ldots,H_k$, where $H_2,\ldots,H_k$ are cycles, and $H_1$ 
is either a cycle, or a union of two cycles 
that are either connected by a path or share a path 
(see the proof of Lemma~\ref{lm:rho=1}). 

Let $\cC$ be the property of being connected. 
Recall that, with high probability, the hitting time $\tau_\cC$ for connectivity 
coincides with that of the disappearance of isolated vertices, 
and $\tau_\cC=\frac{\ln n+O_p(1)}{n}\cdot{n\choose 2}$; see \cite[Theorems 7.4, 7.7]{Bol01}. 
Since $\delta(H)=2$, note that $\langle G\rangle_H\neq K_n$ for any $G\subset K$
with an isolated vertex. 
Therefore, it suffices to find a graph property $\cP$ such that: 
\begin{itemize}
\item $\cC\cap\cP$ is increasing, 
\item with high probability $\tau_\cC=\tau_{\cC\cap\cP}$, and 
\item deterministically any graph with $\cC\cap\cP$ is $H$-percolating. 
\end{itemize}

Recall that, in the proof of Lemma~\ref{lm:rho=1}, we constructed a pair 
$(f,A)\in\cA(H)$, where $A$ is a union of a cycle and one or two long paths,
and possibly also a disjoint copy of some graph $H_2$ that plays no essential role
(and note that perfect trees are note required in constructing $A$ here, 
as we currently assume that $\delta(H)=2$.)
We let $\cP$ be the property that every pair of non-isolated vertices has an 
$(f,A)$-extension, for some fixed $v(A)=\Theta(\log n)$. 

To show that with high probability $\tau_\cC=\tau_{\cC\cap\cP}$, 
consider $\Gnp$ with let $p=(1\pm o(1))\log n/n$. 
With high probability, every non-isolated vertex in $\Gnp$ has a neighor
of degree at least $0.5 \log n$. 
Take a pair of vertices $v_1,v_2$, and reveal their neighborhoods. 
If at least one of them is isolated, there is nothing to prove, 
so assume both vertices have neighbors $u_1,u_2$ with degrees at least $0.5 \log n$. 
Let $X$ be the number of $(f,A)$-extensions of $\{v_1,v_2\}$. 
The expected number of such extensions equals 
$\Theta(n^{v(A)}p^{v(A)+1})=n^{\omega(1)}$. 
By Janson's inequality, $\mathbb{P}(X=0)\le \exp(-\mu^2/(2\Delta))$, 
where 
\[
\Delta\le \max\{\mu,2\mu^2/ np\log n,\mu^2/(np)^3\}
\le \mu^2/0.4\log^2 n. 
\]
Therefore, $\mathbb{P}(X=0)\le e^{-0.4\log^2 n}=n^{-\omega(1)}$,
and so by taking a union bound over all pairs $\{v_1,v_2\}$ 
we complete the proof.
\end{proof}

\providecommand{\bysame}{\leavevmode\hbox to3em{\hrulefill}\thinspace}
\providecommand{\MR}{\relax\ifhmode\unskip\space\fi MR }
\providecommand{\MRhref}[2]{%
  \href{http://www.ams.org/mathscinet-getitem?mr=#1}{#2}
}
\providecommand{\href}[2]{#2}

\end{document}